\numberwithin{equation}{section}
\theoremstyle{plain}
\newtheorem{Definition}{Definition}[section]
\newtheorem{Remark}{Remark}[section]
\newtheorem{Theorem}{Theorem}[section]
\newtheorem{Lemma}{Lemma}[section]
\newtheorem{Proposition}{Proposition}[section]
\newtheorem{Corollary}{Corollary}[section]
\newtheorem{Assumption}{Assumption}[section]
\newcommand{\be}{\begin{equation}}
\newcommand{\ee}{\end{equation}}
\newcommand{\bee}{\begin{equation*}}
\newcommand{\eee}{\end{equation*}}
\newcommand{\bi}{\begin{itemize}}
\newcommand{\ei}{\end{itemize}}
\def \E{\mathbb{E}}
\def \F{\mathbb{F}}
\def \N{\mathbb{N}}
\def \Z{\mathbb{Z}}
\def \P{\mathbb{P}}
\def \R{\mathbb{R}}
\def \X{\mathbb{X}}
\def \Cc{{\mathcal C}}
\def \Ec{{\mathcal E}}
\def \Fc{{\mathcal F}}
\def \eps{\varepsilon}
\def \rhotilde{\tilde{\rho}}
\def \Jtilde{\tilde{J}}
\def \xhstarv{\left\lfloor\frac{x^*}{\sqrt{h}} \right\rfloor}
\def \xhstarh{\lfloor x^*/\sqrt{h} \rfloor}
\newcommand{\setword}[2]{%
	\phantomsection
	#1\def\@currentlabel{\unexpanded{#1}}\label{#2}%
}
\title{Binomial-tree approximation for time-inconsistent stopping}
\author{Erhan Bayraktar\thanks{
		Department of Mathematics, University of Michigan, Ann Arbor, email: \texttt{erhan@umich.edu}. E. Bayraktar is partially supported by the National Science Foundation under grant DMS2106556 and by the Susan M. Smith chair.}	
	\and Zhenhua Wang\thanks{
		 Zhongtai Securities Institute for Financial Studies and School of Mathematics, Shandong University, email: \texttt{zhenhuaw@sdu.edu.cn}. }
	\and Zhou Zhou\thanks{School of Mathematics and Statistics, University of Sydney, Australia, email:
		\texttt{zhou.zhou@sydney.edu.au}.}
}
\begin{document}

\maketitle

\date{}

\begin{abstract}
For time-inconsistent stopping in a one-dimensional diffusion setup, we investigate how to use discrete-time models to approximate the original problem. In particular, we consider the value function $V(\cdot)$ induced by all mild equilibria in the continuous-time problem, as well as the value $V^h(\cdot)$ associated with the equilibria in a binomial-tree setting with time step size $h$. We show that $\lim_{h\to 0+}V^h\leq V$. We provide an example showing that the exact convergence may fail. Then we relax the set of equilibria and consider the value $V_\eps^h(\cdot)$ induced by $\eps$-equilibria in the binomial-tree model. We prove that $\lim_{\eps\to 0+}\lim_{h\to 0+}V_\eps^h=V$.
\end{abstract}

{\bf Keywords:} Time-inconsistent stopping, binomial-tree approximation, $\eps$-equilibrium, weighted discount function.\\

\section{Introduction}

Time inconsistency refers to a phenomenon where a strategy planned to be optimal today may no longer be optimal from a future's perspective due to the change of preferences. A common approach to address this time inconsistency is to use a game-theoretic framework and look for an equilibrium strategy: given future selves use this strategy, the current self has no incentive to deviate. For equilibrium strategies of time-inconsistent control problems, we refer to \cite{MR4387700, MR4328502, MR4288523} and the references therein.

Very recently, there has been a lot of research on equilibrium strategies for time-inconsistent stopping. See \cite{bwz-2023, MR4205889, bodnariu2024local,MR3880244, MR4080735,huang2018time,MR4273542,MR3911711,MR4332966,ssrn4431616}, to name a few. It is worth noting that most of these works focus on the characterization and/or construction of equilibria. Apart from these works, \cite{MR4609255,MR4205889,ssrn4431616} compare different notions of equilibria in continuous time under non-exponential discounting; in particular, \cite{MR4609255,MR4205889} show that an optimal equilibrium (its existence is proved in \cite{MR4116459}), is also a weak and strong equilibrium under certain assumptions. \cite{bwz-2022,bwz-2023} investigate the stability of equilibrium-associated value function under a perturbation of the payoff function and the transition law of the underlying process. In a mean-variance setup, \cite{MR4276004} analyzes another kind of stability regarding whether a strategy near an equilibrium would converges to that equilibrium under policy adjustment.

The focus of this work is very different from the previous literature on time-inconsistent stopping. In this paper, we consider a time-inconsistent stopping problem under one-dimensional diffusion with a weighted discount function in infinite horizon. We are interested in how to approximate the time-inconsistent problem by a discrete-time discrete-space model. To be more specific, we consider the value function $V(\cdot)$ induced by all mild equilibria (equivalently by an optimal equilibrium, see Definition \ref{def.mild.optimal}) in continuous time, as well as the value $V^h(\cdot)$ associated with the equilibria in a carefully-designed binomial tree model with time-step size $h$. As our first main result, we show that $\limsup_{h\to 0+}V^h\leq V$. We provide an example (see Section \ref{sec:4}) showing that a strict inequality is possible. Next, we relax the equilibria set in the binomial tree model and consider the value $V_\eps^h$ induced by all $\eps$-equilibria. As our second main result, we prove that $\lim_{\eps\to 0+}\lim_{h\to 0+}V_\eps^h=V$.

A key step to establish the two main results is to show the convergence of the expected stopping value from discrete to continuous time, uniform in the starting position and stopping region. Thanks to the form of a weighted discount function, we are able to express the expected stopping value by an integral   where the integrand is the stopping value associated with some exponential discounting. Then we use a PDE approach and finite difference method to show the uniform convergence with respect to (w.r.t.) each exponential discounting, and thus the uniform convergence w.r.t. the weighted discounting under certain integrability assumption.

Our work makes a very novel contribution in the literature of time-inconsistent control and stopping. To the best of our knowledge, our paper is the first to consider the discrete-time approximation for continuous-time time-inconsistent stopping problems. Let us mention the works on continuous-time time-consistent control where a time discretization is involved, such as \cite{MR3738841,MR2822686}. In these papers, the time discretization is used to construct an approximate continuous-time equilibrium, and thus the focus of these papers is quite different from ours.

Our paper demonstrates a distinct feature regarding the value functions between time-consistent and time-inconsistent case. For the time-consistent situation, it is well known that under natural assumptions, the related binomial-tree model (and some other discretized model) provides a good approximate for the continuous-time problem regarding the value function, meaning that $V^h$ converges to $V$ as the step size $h$ approaches to zero. However, that is no longer the case under time inconsistency as suggested by our first main result. Specifically, the upper limit of $V^h$ can be strictly less than $V$, as detailed in Section \ref{sec:4}.  To achieve a good approximation, it is necessary to expand the equilibrium set and consider $\eps$-equilibria in the discretized model, which is indicated by our second main result. From this point of view, we believe our results would potentially be very useful for numerical computation for the value of time-inconsistent stopping problems.

{\color{black}We also want to emphasize that our results align with the observation in game theory. That is, when perturbing the underlying model a little bit (we think discretization as one kind of perturbation, meaning that the discretized process for small step size $h$ can be treated as a perturbation of the original continuous-time process), the associated equilibria and values may change significantly;\footnote{More specifically, in a game when the model (e.g., payoff functions or dynamics of state processes) changes a little bit, a player's best response (with respect to other players' strategies) may change a lot. For this reason, a small perturbation of the game model could possibly lead to a significant change of equilibria and associated values.} to ``stablize'' this change caused by the perturbation of the model, one needs to consider $\eps$-equilibria instead; see e.g., \cite{feinstein2022continuity,feinstein2022dynamic}.}

The rest of the paper is organized as follows. In the next section, we provide the continuous-time framework, the associated binomial tree model, as well as the main results of this paper. In Section \ref{sec:3}, we give the proof of our main results. In Section \ref{sec:4} an example showing that the exact convergence may fail for the value function when we only consider the set of exact equilibria in the discretized model.
{\color{black} Section \ref{sec:5} provides a case study of the stopping of an American put option.}

\section{Setup and main results}\label{sec:preliminaries}

In this section, we formulate the continuous-time problem and the corresponding discrete-time model. We provide the main results of this paper at the end of this section.

\subsection{Continuous-time setup}
Denote $\N:=\{0,1,2,\dotso\}$ and $\N_+:= \{1,2,\dotso\}$. 
{\color{black}
For an open set $E\subset \R$, we denote the following H\"older norm on $E$ as
	$$
	\|f\|_{\Cc^k(E)}:=\sum_{i=0}^k\sup_{x\in E}|f^{(i)}(x)|\quad \text{for } k\in \N,$$
provided that the right-hand side above exists.} 
Let $(\Omega, \P, (\Fc_t)_{t}, \F)$ be a filtered probability space supporting a 1-dimensional Brownian motion $W=(W_t)_{t\geq 0}$. Consider a 1-dimensional diffusion $X$ given by 
\begin{equation}\label{e0}
	dX_t=\mu(X_t)dt+\sigma(X_t)dW_t,
\end{equation}
taking values in $\X:=\R$ for any $X_0=x\in\X$. Here $\mu,\sigma: \X\to \R$ are some Borel-measurable functions. {\color{black} Later $\mu, \sigma$ are assumed to be bounded and Lipschitz over $\X$, making \eqref{e0} admitting a unique strong solution.} Let $\F^X:=(\Fc_t^X)$ be the filtration generated by $X$, and $\mathcal{T}$ be the set of $\F^X$-stopping times. {\color{black}Denote $\E_x[\cdot]:=\E[\cdot|X_0=x]$.} Consider the stopping problem
 
\begin{equation}\label{e1}
\sup_{\tau\in\mathcal{T}}\E_x[\delta(\tau)f(X_\tau)]\quad \text{for}\; x\in \R,
\end{equation}
where  $f:\X\to[0,\infty)$ is Borel measurable, $\delta:[0,\infty)\to[0,1]$ is a weighted discount function of the form
\begin{equation}\label{e3}
\delta(t)=\int_0^\infty e^{-rt}dF(r),\quad\forall\,t\in [0,\infty).
\end{equation}
Here $F(r): [0,\infty) \mapsto [0, 1]$ is a cumulative distribution function with $\int_0^\infty rdF(r)<\infty$. Assume $\lim_{t\to\infty}\delta(t)=0$.

\begin{Remark}
Many commonly used non-exponential discount functions can be written in the form \eqref{e3}, such as the hyperbolic, generalized hyperbolic, and pseudo exponential discounting. We refer to \cite{MR4124420} for a detailed discussion about weighted discount function. Moreover, \cite[Proposition 1]{MR4124420} indicates that weighted discount functions satisfy the following property:
	\be\label{eq.delta.logsub}
	\delta(t+s)\geq \delta(t)\delta(s),\quad \forall\, t,s\geq 0.
	\ee 
\end{Remark}

Set $\delta(\infty)f(x):=0$ for any $x\in\X$. We make the following assumptions.
  
 \begin{Assumption}\label{assum:1}
 We assume the following.
 \begin{itemize}
 \item[(i)] $f$ is bounded and Lipschitz continuous.
\item [(ii)] $\|\mu\|_{\Cc^2(\X)}, \|\sigma\|_{\Cc^2(\X)}$ are finite and $\inf_{x\in \X}\sigma(x)>0$. 
 \end{itemize} 
 \end{Assumption}

Given a closed set $S\subset \X$, denote
\be  
\rho_S:= \inf\{t>0: X_t\in S\}\quad\text{and}\quad J(x, S):= \E_x \left[ \delta(\rho_S) f(X_{\rho_S})\right].
\ee 
As $\delta(\cdot)$ is non-exponential, the problem \eqref{e1} may be time-inconsistent. Following \cite{MR4205889}, we define mild equilibria and optimal mild equilibria as follows.

\begin{Definition}[Mild equilibria and optimal mild equilibria]\label{def.mild.optimal}
	A closed set $S\subset\X$ is said to be  a mild equilibrium (in continuous time), if
{\color{black}
\begin{numcases}{}
	\label{eq:def.mild1} f(x)\leq J(x,S),\quad \forall x\notin S,	\\
	\label{eq:def.mild0}f(x)\geq J(x,S),\quad \forall x\in S.
\end{numcases}
}
	Denote $\Ec$ the set of mild equilibria.
	A mild equilibrium $S$ is  said to be optimal, if for any other mild equilibrium $R\in\Ec$, 
	$$J(x,S)\geq J(x,R),\quad \forall\,x\in\X.$$
\end{Definition}

\begin{Remark}\label{rm:def.equilibrium}
{\color{black} Note that $f(x)$ represents the value for immediate stopping, while $J(x,S)$ represents the value for continuing until hitting the stopping region $S$. Thus, \eqref{eq:def.mild1} means that, when $x\notin S$, it is better to continue than to stop, indicating no incentive to deviate from continuing. The same logic applies to the case $x\in S$ in \eqref{eq:def.mild0}, i.e., no incentive to switch from stopping to continuing. By Assumption \ref{assum:1} (ii) and the one-dimensional setting, $\rho_S=0$ a.s. for $x\in S$, and thus $J(x,S)=f(x)$ for $x\in S$, making \eqref{eq:def.mild0} hold trivially.  

Assumption \ref{assum:1} (ii) and the one-dimensional setting further imply that $\rho_S=\rho_{\overline{S}}$ a.s for all $x\in \X$ and Borel set $S\subset\X$. Therefore, $J(\cdot, S)=J(\cdot, \overline S)$, which means that any Borel stopping region is equivalent to its closure. That is why we only focus on closed sets in Definition \ref{def.mild.optimal}. We refer to \cite[Section 4.1]{MR4116459} for a more detailed explanation. 

Apart from mild equilibria, there are stronger notions of equilibria in the continuous-time setup. We refer to \cite{MR4205889,MR4276004} for a detailed discussion and comparison. It is shown in \cite[Theorem 4.1]{MR4116459} that there exists a mild equilibrium that is the smallest in terms of inclusion and optimal in the sense of pointwise dominance for value functions. We restate this result as a lemma in the following.}
\end{Remark}

\begin{Lemma}\label{lm.optimalmild}
	Suppose $\mu,\sigma,f$ are bounded and continuous, and $\sigma(x)>0$ for $x\in\X$. Then 
	$$S_*:= \cap_{S\in \Ec} S$$
	is an optimal mild equilibrium.
\end{Lemma}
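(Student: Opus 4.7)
The plan is to follow the structure of \cite[Theorem 4.1]{MR4116459}, whose hypotheses are met here, and to decompose the argument into three parts: (i) verifying that $\Ec$ is nonempty so that $S_*$ is well-defined; (ii) establishing a monotonicity principle that yields optimality as soon as $S_*\in\Ec$; and (iii) showing $S_*\in\Ec$, which I expect to be the main technical obstacle. For (i), $S=\X$ is trivially a mild equilibrium since $\rho_\X=0$ a.s.\ forces $J(x,\X)=f(x)$, so $\Ec\ni\X$ and $S_*$ is well-defined as a closed set. Condition \eqref{eq:def.mild0} for $S_*$ is then automatic by the same one-dimensional reasoning recorded in Remark \ref{rm:def.equilibrium}: for $x\in S_*$, $\sigma>0$ gives $\rho_{S_*}=0$ a.s., hence $J(x,S_*)=f(x)$.

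For the monotonicity principle I want the following: whenever $R,S\in\Ec$ with $R\subseteq S$, one has $J(\cdot,R)\geq J(\cdot,S)$ pointwise. This is where the log-subadditivity \eqref{eq.delta.logsub} of $\delta$ is decisive. Since $R\subseteq S$ forces $\rho_S\leq \rho_R$ pathwise, applying the strong Markov property at $\rho_S$ and using \eqref{eq.delta.logsub} gives
\[
J(x,R)=\E_x[\delta(\rho_R)f(X_{\rho_R})]\;\geq\;\E_x\bigl[\delta(\rho_S)\,J(X_{\rho_S},R)\bigr].
\]
On $S$ one has $J(\cdot,R)\geq f$ (trivially on $R$, and by \eqref{eq:def.mild1} applied to the equilibrium $R$ on $S\setminus R$), so $J(x,R)\geq \E_x[\delta(\rho_S)f(X_{\rho_S})]=J(x,S)$. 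Once $S_*\in\Ec$ is known, taking $R=S_*$ delivers optimality immediately.

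The main obstacle, part (iii), is to verify \eqref{eq:def.mild1} for $S_*$ at every $x\notin S_*$. My strategy is to realize $S_*$ as the intersection of a decreasing chain $\{S_n\}_{n\geq 1}\subseteq\Ec$: second-countability of $\X=\R$ lets me reduce the open cover $\{S^c:S\in\Ec\}$ of $S_*^c$ to a countable subcover, and the monotonicity principle of the previous step lets me assemble those members into a decreasing chain whose intersection is still $S_*$. For fixed $x\notin S_*$, eventually $x\notin S_n$, so \eqref{eq:def.mild1} for $S_n$ yields $f(x)\leq J(x,S_n)$. To pass to the limit I need $\rho_{S_n}\nearrow \rho_{S_*}$ a.s.; this is the delicate point, and I would prove it using path continuity of $X$ together with $\sigma>0$ (so that the one-dimensional diffusion cannot avoid the limit set), after which dominated convergence (via boundedness of $f$ and $\delta\leq 1$) transfers the inequality to $f(x)\leq J(x,S_*)$. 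The detailed verification of the chain assembly and of the hitting-time convergence near $\partial S_*$ are the two subtle steps, both carried out in \cite[Theorem 4.1]{MR4116459}, and I would follow that argument.
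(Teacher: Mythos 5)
The paper does not prove this lemma; it simply restates it as a citation of \cite[Theorem 4.1]{MR4116459}. You are therefore supplying a proof sketch where the paper gives none, and the sketch should be judged on its own merits.

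Parts (i) and (ii) are fine. In (ii), in fact you do not even need $S\in\Ec$: since $J(\cdot,R)\geq f$ holds on all of $\X$ (equality on $R$, and $\geq$ on $R^c$ by the equilibrium property of $R$), the estimate
\begin{equation*}
J(x,R)\geq \E_x\bigl[\delta(\rho_S)\,J(X_{\rho_S},R)\bigr]\geq \E_x\bigl[\delta(\rho_S)f(X_{\rho_S})\bigr]=J(x,S)
\end{equation*}
holds for every closed $S\supseteq R$ as long as $R\in\Ec$. That sharper form is what one actually uses.

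The gap is in part (iii), specifically the ``chain assembly.'' From a countable subfamily $\{S_i\}_{i\geq 1}\subseteq\Ec$ with $\cap_i S_i=S_*$, the natural decreasing chain is $T_n:=\cap_{i=1}^n S_i$, but to invoke $f\leq J(\cdot,T_n)$ off $T_n$ you need $T_n\in\Ec$, i.e., that $\Ec$ is closed under finite intersections. That is a genuinely nontrivial fact — indeed one of the core lemmas underlying \cite[Theorem 4.1]{MR4116459} — and it does not follow from the monotonicity principle, which only compares $J(\cdot,R)$ to $J(\cdot,S)$ for $R\subseteq S$ with $R$ already known to be an equilibrium; it provides no mechanism to certify $R\cap S\in\Ec$. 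Your sketch attributes the assembly to the monotonicity principle, which is where the argument breaks.

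Conversely, the step you flag as ``the delicate point,'' $\rho_{S_n}\nearrow\rho_{S_*}$, is actually the easy one once you have a genuinely decreasing chain of closed sets: $\rho_{S_n}$ is nondecreasing with limit $\tau\leq\rho_{S_*}$; on $\{\tau<\infty\}$ path continuity gives $X_{\rho_{S_n}}\to X_\tau$, and since $X_{\rho_{S_n}}\in S_n\subseteq S_m$ for $n\geq m$ with $S_m$ closed, $X_\tau\in S_m$ for every $m$, hence $X_\tau\in S_*$ and $\tau\geq\rho_{S_*}$. Then $J(x,T_n)\to J(x,S_*)$ by bounded convergence (using $\delta$ continuous, $\delta(t)\to 0$, $f$ bounded continuous). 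So the hard part of your plan is the omitted intersection lemma, not the hitting-time limit.
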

We define the value induced by mild equilibria in continuous time
$$V(x):= \sup_{S\in\Ec}J(x,S)=J(x,S_*).$$
Our goal in this paper is to approximate $V$ using ($\eps$-)equilibrium strategies in discrete time. Let us introduce the time and state space discretization in the next subsection.

\subsection{Time and state space discretization}
Given $h>0$, let $\X^h:= \{x_k^h\}_{k\in \Z},(d_{k,\pm})_{k\in \Z}\subset\R$ defined recursively as follows:
$$x_0^h=0,\quad d_{0,+}=d_{0,-}=\sigma(x_0^h)\sqrt{h},$$
\be\label{eq:discrete.x}
\begin{cases}
x_k^h=x_{k-1}^h+d_{k-1,+},\quad d_{k,-}=d_{k-1,+},\quad d_{k,+}=\frac{\sigma^2(x_k^h)h}{d_{k,-}}, &k=1,2,\dotso\\
 x_k^h=x_{k+1}^h-d_{k+1,-},\quad d_{k,+}=d_{k+1,-},\quad d_{k,-}=\frac{\sigma^2(x_k^h)h}{d_{k,+}}, &k=-1,-2,\dotso.
\end{cases}
\ee 

\begin{Assumption}\label{a3}
There exists some $c_1,c_2>0$ independent of $h$ such that
\begin{equation}\label{eq100}
c_1\sqrt{h}\leq d_{k,\pm}\leq c_2\sqrt{h}.
\end{equation}
\end{Assumption}

\begin{Remark}
Suppose $\underline\sigma:=\inf_{x\in \X}\sigma(x)>0$ and the total variation of $\sigma(\cdot)$ is finite, i.e., there exists a constant $C>0$ such that for any $(y_k)_{k\in\Z}\subset\R$ with $y_k<y_{k+1}$ for $k\in\Z$, it holds that
$$\sum_{k\in\Z}|\sigma(y_{k+1})-\sigma(y_k)|\leq C.$$
Then Assumption \ref{a3} is satisfied. Indeed, when $k$ is positive and even, we can compute that
$$d_{k,+}=\frac{\Pi_{i=1}^{k/2}\sigma^2(x_{2i}^h)}{\Pi_{i=1}^{k/2}\sigma^2(x_{2i-1}^h)}\sigma(0)\sqrt{h}.$$
We have that
\begin{align*}
\left|\ln\left(\frac{\Pi_{i=1}^{k/2}\sigma^2(x_{2i}^h)}{\Pi_{i=1}^{k/2}\sigma^2(x_{2i-1}^h)}\right)\right|\leq2\sum_{i=1}^{k/2}\left|\ln\sigma(x_{2i}^h)-\ln\sigma(x_{2i-1}^h)\right|\leq\frac{2}{\underline\sigma}\sum_{i=1}^{k/2}|\sigma(x_{2i}^h)-\sigma(x_{2i-1}^h)|\leq \frac{2C}{\underline\sigma}.
\end{align*}
As a result, we can take $c_1:=\sigma(0)e^{-2C/{\underline\sigma}}$ and $c_2:=\sigma(0)e^{2C/{\underline\sigma}}$ in \eqref{eq100} for $d_{k,+}$ with $k$ positive and even. The other cases for $d_{k,\pm}$ can be proved similarly.
\end{Remark}

For $x\in \X$, define $x(h):=x^h_k$ when $x\in [x^h_k, x^h_{k+1})$. For any closed set $S\subset \X$, denote by 
$$S(h):=\left\{ x_k^h:\ [x^h_k, x^h_{k+1})\cap S \ne \emptyset,\ k\in\Z \right\},$$
Assumption \ref{a3} ensures that 
\be\label{eq:S.xunf} 
\sup_{S \text{ closed }} dist(S(h), S)\to 0, \quad \sup_{x\in \X} |x(h)-x|\to 0, \quad \text{ as } h\to 0+.
\ee 
Denote by $X^h:=(X^h_{n})_{n\in \N}$ the discrete-time binomial tree on $\X^h$ with time-step size $\Delta t = h$ and transition probabilities equal to
\be\label{eq:discrete.P}  
\P\left(X^h_{n+1}= x^h_{k\pm1} \mid X^h_{n}= x^h_{k}\right)=
 \frac{d_{k,\mp}}{d_{k,+}+d_{k,-}}\pm\frac{\mu(x^h_k)h}{d_{k,+}+d_{k,-}}=: p^h_{k,\pm}.
 \ee 
Let
$$\delta^h(k):= \int_0^\infty (1+r h)^{-k} dF(r),\quad k\in\N.$$
It is easy to see that
$$
\lim_{h\to 0+}\delta^{h}(\lfloor t/h\rfloor)=\delta(t),\quad t\geq 0.
$$
For a closed set $S\subset \X$ and $x\in\X$, denote
\begin{align*}  
\rho_S(h):= \inf\{k\in\N_+:\ X^h_k\in S(h)\}\quad\text{and}\quad J^h(x, S)=\E_{x(h)}^h\left[\delta^h(\rho_S(h)) f(X^h_{\rho_S(h)}) \right],\\
\tilde\rho_S(h):= \inf\{k\in\N:\ X^h_k\in S(h)\}\quad\text{and}\quad \tilde J^h(x, S)=\E_{x(h)}^h\left[\delta^h(\tilde\rho_S(h)) f(X^h_{\tilde\rho_S(h)}) \right].
\end{align*}
where $\E_y^h[\cdot]=\E^h[\cdot|X_0^h=y]$ for $y\in\X^h$, and we write $\E^h$ to emphasis that the expectation is under the binomial-tree setting. 
Following \cite{bwz-2023}, we define ($\eps$-)equilibria in the binomial-tree model as follows.
\begin{Definition}\label{d2}
Let $\eps\geq 0$ and $S\subset\X^h$. $S$ is called an $\varepsilon$-equilibrium (under the binomial-tree setting), if 
	\be \label{eq.def.epsequi}
\begin{cases}
	f(x)\leq J^h(x,S)+\eps,&x\notin S,\\
	f(x)+\eps\geq J^h(x,S),& x\in S.
\end{cases}
	\ee 
Denote $\Ec^h_\eps$ the set of $\varepsilon$-equilibria. $S$ is called an equilibrium if the above inequalities hold with $\eps=0$. We also denote $\Ec^h:=\Ec^h_0$. $S$ is called an optimal equilibrium, if for any equilibrium $R\in\Ec^h$, it holds that
\begin{equation}\label{e2}
\tilde J^h(x,S)\geq \tilde J^h(x,R)\quad(\Longleftrightarrow  J^h(x,S)\geq J^h(x,R)),\quad\forall\,x\in\X^h.
\end{equation}
\end{Definition}

\begin{Remark}
$\tilde J^h(x,R)=J^h(x,R)\vee f(x)$ can be thought of as the value associated with the equilibrium $R$.
{\color{black} Note in continuous time, we have $J(x,S)=J(x,S)\vee f(x)$ for any mild equilibrium $S$, thanks to Assumption \ref{assum:1}.}
\end{Remark}

The following result is from \cite[Lemma 2.1]{bwz-2023}.
\begin{Lemma}
Assume $f$ is bounded. Then
$$S_*^h:=\cap_{S\in\Ec^h} S$$
is an optimal equilibria {\color{black}(under the Binomial-tree setting)}.
\end{Lemma}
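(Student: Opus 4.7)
The statement has two assertions: (a) the set $S_*^h$ is itself an equilibrium, and (b) it is optimal in the sense of \eqref{e2}. My plan is to prove them in the order (b)$\Leftarrow$(a), i.e., first derive optimality assuming (a), and then tackle (a), since the machinery used for (a) subsumes that for (b).

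For the optimality step, the key lemma I would establish is an \emph{anti-monotonicity of values}: if $R, S \in \Ec^h$ with $R \subset S$, then $\tilde J^h(x, R) \geq \tilde J^h(x, S)$ for every $x \in \X^h$. Applying this with $R = S_*^h$ yields \eqref{e2}. I would verify it by splitting on the location of $x$. The cases $x \in R$ and $x \in S \setminus R$ are immediate, both using that $\tilde J^h(y, T) = f(y)$ whenever $T$ is an equilibrium containing $y$. The essential case is $x \notin S$: apply the strong Markov property at $\rho_S(h) \leq \rho_R(h)$, combined with the log-subadditivity $\delta^h(n+m) \geq \delta^h(n)\delta^h(m)$. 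This last property follows for our weighted $\delta^h$ from Chebyshev's sum inequality applied to the two decreasing integrands $r\mapsto (1+rh)^{-n}$ and $r\mapsto (1+rh)^{-m}$, together with $F(\infty)\leq 1$. The chain gives $J^h(x, R) \geq \E^h_{x(h)}\bigl[\delta^h(\rho_S(h)) \tilde J^h(X^h_{\rho_S(h)}, R)\bigr]$, and since at every landing point $y = X^h_{\rho_S(h)} \in S$ one has $\tilde J^h(y, R) \geq f(y) = \tilde J^h(y, S)$, the desired inequality follows.

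The harder piece is showing $S_*^h \in \Ec^h$. I would first prove closure of $\Ec^h$ under finite intersections: given $S_1, S_2 \in \Ec^h$ and $S := S_1 \cap S_2$, the verification of the equilibrium inequalities \eqref{eq.def.epsequi} proceeds again by strong Markov at the first hit of the larger set $S_1$ or $S_2$, using the individual equilibrium inequalities at the landing point together with the same log-subadditivity of $\delta^h$. Then I would lift to the full intersection using the countability of $\X^h$: enumerate $\X^h \setminus S_*^h = \{z_1, z_2, \dots\}$, pick for each $i$ some $S^i \in \Ec^h$ with $z_i \notin S^i$, and set $T_n := S^1 \cap \dots \cap S^n$. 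By closure under finite intersections each $T_n \in \Ec^h$, and $T_n \downarrow S_*^h$. After verifying $\rho_{T_n}(h) \uparrow \rho_{S_*^h}(h)$ almost surely, I would pass to the limit by bounded convergence (using $f$ bounded and $\delta^h \leq 1$) to get $J^h(x, T_n) \to J^h(x, S_*^h)$; the equilibrium inequalities \eqref{eq.def.epsequi} are preserved under this limit, giving $S_*^h \in \Ec^h$.

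I expect the principal obstacle to be precisely this passage to the countable limit, namely the almost-sure convergence $\rho_{T_n}(h) \uparrow \rho_{S_*^h}(h)$: one must rule out that paths of $X^h$ keep hitting $T_n$ strictly before $S_*^h$ via states in $T_n \setminus S_*^h$ as $n$ grows. The discrete structure should save us here, since on any finite horizon $[0,K]$ the trajectory visits only finitely many grid points; once $n$ is large enough that all visited points lying outside $S_*^h$ have already been excluded (i.e., equal some $z_i$ with $i \leq n$), the hitting times up to time $K$ of $T_n$ and $S_*^h$ coincide, reducing the matter to a clean finite-time argument followed by monotonicity in $K$.
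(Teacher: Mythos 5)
The paper does not give its own proof of this lemma; it cites \cite[Lemma~2.1]{bwz-2023}, whose machinery (as Section~\ref{sec:4} of this paper makes explicit) is built around \emph{pseudo equilibria}, i.e.\ sets required to satisfy only the one-sided inequality $f(x)\le J^h(x,S)$ for $x\notin S$, together with the facts that pseudo equilibria are closed under intersection and that $S_*^h$ equals the smallest pseudo equilibrium. Against that background: your anti-monotonicity lemma is correct (and, as you can check, it only needs $R$ to be a pseudo equilibrium, not a full equilibrium), and the passage to the countable limit is fine — in fact $\rho_{T_n}\uparrow\rho_{S_*^h}$ follows directly because $\rho_{T_n}$ is integer valued and monotone, and $X_{\rho_\infty}\in\bigcap_n T_n$ once $\rho_{T_n}$ stabilizes.

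The genuine gap is in your ``closure of $\Ec^h$ under finite intersections.'' You claim both inequalities of \eqref{eq.def.epsequi} for $S=S_1\cap S_2$ follow from strong Markov plus log-subadditivity of $\delta^h$, but the two inequalities behave asymmetrically. For $x\notin S$ one needs $f(x)\le J^h(x,S)$; this \emph{can} be shown, but it requires an alternating iteration (hit $S_1$, if the landing point is not in $S_2$ then hit $S_2$, and so on), not a single strong-Markov step, with log-subadditivity used at the end to lower-bound $\delta^h(\rho_S)$ by the telescoping product of increments. For $x\in S$, however, one needs $f(x)\ge J^h(x,S)$, i.e.\ an \emph{upper} bound on $J^h(x,S)$; but log-subadditivity $\delta^h(n+m)\ge\delta^h(n)\delta^h(m)$ gives a lower bound, and the individual equilibrium inequalities $f(y)\le J^h(y,S_i)$ at intermediate landing points $y\in S_i\setminus S$ also push in the wrong direction — the chain of inequalities cannot close. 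It is in fact unclear whether $\Ec^h$ (with the two-sided definition) is closed under finite intersections at all, and the cited proof avoids the question by working with pseudo equilibria: one shows the smallest pseudo equilibrium $\hat S$ exists, equals $S_*^h$, and then verifies the missing $x\in\hat S$ inequality \emph{a posteriori} by minimality (if $f(x_0)<J^h(x_0,\hat S)$ for some $x_0\in\hat S$, one shows $\hat S\setminus\{x_0\}$ is still a pseudo equilibrium, contradicting minimality). Your proposal skips this step entirely, so as written it does not establish that $S_*^h\in\Ec^h$.
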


Denote the value induced by all ($\eps$-)equilibria in the binomial-tree setup as follows
\begin{equation}\label{e6}
V_\eps^h(x):=\sup_{S\in\Ec^h_\eps}\tilde J^h(x,S)\quad\text{and}\quad V^h(x):=\sup_{S\in\Ec^h}\tilde J^h(x,S)=\tilde J^h(x, S_*^h),\quad x\in\X^h.
\end{equation}

\subsection{Main results}
We are ready to state the main results of this paper. Their proofs are provided in the next section.
\begin{Theorem}\label{thm:semicts}
	Let Assumptions \ref{assum:1} and \ref{a3} hold. Then
\begin{equation}\label{eq001}
	\limsup_{h\to 0+} V^h(x) \leq V(x),\quad x\in \X.
\end{equation}
\end{Theorem}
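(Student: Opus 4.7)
The plan is to construct a mild equilibrium $S_\infty \subset \X$ from a limit of optimal discrete equilibria $S_*^{h_n}$ and show that $J(x,S_\infty)$ is at least as large as any subsequential limit of $V^h(x)$, which gives the desired inequality via $V(x)\geq J(x,S_\infty)$. The workhorse behind passing to the limit is a uniform convergence lemma that the authors flagged in the introduction:
\begin{equation*}
\sup_{S\subset \X\text{ closed}}\sup_{x\in\X}\bigl|\tilde J^h(x,S)-J(x,S)\bigr|\longrightarrow 0\quad\text{as }h\to 0+.
\end{equation*}
I would establish this first. Using the integral representation \eqref{e3}, one can write $J(x,S)=\int_0^\infty \E_x[e^{-r\rho_S}f(X_{\rho_S})]\,dF(r)$ and analogously for the discrete payoff. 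For each fixed $r$, the exponentially discounted stopping value on $\X\setminus S$ solves a linear elliptic PDE with Dirichlet boundary $f$ on $S$, and the binomial-tree analogue is exactly its standard finite-difference discretization scheme induced by \eqref{eq:discrete.x}--\eqref{eq:discrete.P} with discount factor $(1+rh)^{-1}$; Assumption \ref{assum:1}(ii) and \eqref{eq100} guarantee uniform ellipticity and stability. A maximum-principle / barrier argument, combined with the Lipschitz regularity of $f$ and \eqref{eq:S.xunf}, should give an error bound depending on $r$ but independent of $S$ and $x$; integrating against $dF(r)$ and using $\int r\,dF(r)<\infty$ together with $\delta(\infty)=0$ gives the uniform convergence.

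With this in hand, I would pick a subsequence $h_n\to 0+$ realizing $\limsup_{h\to 0+}V^h(x)$ and consider the associated optimal discrete equilibria $S_n:=S_*^{h_n}$, viewed as closed subsets of $\X$ via the embedding $S_n\subset \X^{h_n}\subset\X$. Using a standard Painlev\'e--Kuratowski compactness argument on closed subsets of $\R$ (every sequence of closed sets has a Kuratowski-convergent subsequence), extract a further subsequence, still denoted $(S_n)$, with Kuratowski limit $S_\infty$, a closed subset of $\X$. Since $\X^{h_n}$ becomes dense and uniform in $h_n$ via \eqref{eq:S.xunf}, the operations $S\mapsto S(h)$ behave well under $h\to 0+$, so I can harmlessly identify $S_n$ with $S_n(h_n)$.

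The next step is to show $\tilde J^{h_n}(x,S_n)\to J(x,S_\infty)$. By the uniform convergence lemma, it suffices to show continuous-time stability: $J(x,S_n)\to J(x,S_\infty)$ for each $x$, under Kuratowski convergence of the stopping regions. This follows from the diffusion being non-degenerate and the hitting time $\rho_{S_n}$ converging in probability to $\rho_{S_\infty}$, combined with continuity of $X$ and dominated convergence (here I would invoke or mimic the stability arguments from \cite{bwz-2022,bwz-2023}). Then I would verify $S_\infty\in\Ec$: for $x\notin S_\infty$, Kuratowski convergence implies $x\notin S_n$ for all large $n$, so the discrete equilibrium condition $f(x)\leq J^{h_n}(x,S_n)=\tilde J^{h_n}(x,S_n)$ (the equality since $x\notin S_n$ means $\rho_{S_n}(h_n)=\tilde\rho_{S_n}(h_n)$ a.s., up to boundary issues handled through $f$-Lipschitz continuity and \eqref{eq:S.xunf}) passes to the limit, yielding \eqref{eq:def.mild1}; \eqref{eq:def.mild0} is automatic for $x\in S_\infty$ by Remark \ref{rm:def.equilibrium}. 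Finally,
\begin{equation*}
\limsup_{h\to 0+}V^h(x)=\lim_{n\to\infty}\tilde J^{h_n}(x,S_n)=J(x,S_\infty)\leq \sup_{S\in\Ec}J(x,S)=V(x).
\end{equation*}

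The main obstacle is the uniform convergence lemma, particularly its uniformity over \emph{all} closed stopping regions $S$ simultaneously. The PDE/finite-difference analysis gives rates that typically depend on the geometry of the boundary of $S$ and on the discount rate $r$, and controlling both uniformly — especially for $S$ having thin or irregular structure — will require exploiting the one-dimensional geometry (so that boundaries are unions of endpoints of intervals) and the absorbing barrier property together with the non-degeneracy in Assumption \ref{assum:1}(ii). A secondary delicate point is the set-convergence step: one must rule out pathological loss of mass near $\partial S_\infty$ when transferring equilibrium inequalities from $S_n$ to $S_\infty$, for which the strict positivity of $\sigma$ and the Lipschitz continuity of $f$ provide the required regularity.
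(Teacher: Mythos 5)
Your proposal correctly identifies the central technical device — the uniform convergence lemma $\sup_{S}\|\tilde J^h(\cdot,S)-J(\cdot,S)\|_\infty\to 0$ — and sketches essentially the paper's PDE/finite-difference proof of it (Lemmas \ref{lm:J.r}--\ref{lm:J.Jh} and Proposition \ref{lm:main}). From there, however, you diverge from the paper in a way that introduces a real gap.

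The paper does not attempt to extract a limiting closed set $S_\infty$. Instead it observes that, by Proposition \ref{lm:main} together with Corollary \ref{lm:uniflip} and \eqref{eq:S.xunf}, the discrete optimal equilibrium $S_*^h$ is an $\eps$-\emph{mild} equilibrium in continuous time for all small $h$, hence $J(\cdot,S_*^h)\le V_\eps(\cdot)$ and thus $V^h(\cdot)=\tilde J^h(\cdot,S_*^h)\le V_\eps(\cdot)+\eps$. Sending $h\to 0+$ and then $\eps\to 0+$ and invoking Lemma \ref{lm:eps} (the known fact $V_\eps\to V$ from \cite{bwz-2022}) finishes the argument. No compactness on the space of closed sets, no set-convergence stability of $J$, is needed.

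Your route, by contrast, requires a stability statement for the continuous-time functional that is nowhere established: that $J(x,S_n)\to J(x,S_\infty)$ whenever $S_n\to S_\infty$ in the Painlev\'e--Kuratowski sense. You assert this "should follow" from convergence of $\rho_{S_n}$ to $\rho_{S_\infty}$ in probability together with dominated convergence, citing \cite{bwz-2022,bwz-2023}, but those works prove stability with respect to perturbations of the \emph{payoff} and the \emph{dynamics}, not with respect to Kuratowski convergence of the \emph{stopping region}. The latter is a distinct and delicate claim: Kuratowski convergence controls set-theoretic limits but says nothing a priori about uniformity of hitting-time behaviour for sets with accumulating or vanishing components, and this is precisely the point at which mass near $\partial S_\infty$ can be lost. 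Even granting it in 1D with uniformly elliptic diffusion (which is plausible, but requires its own proof using the interval structure of closed 1D sets), you have replaced a single published lemma with a new unproved one. The paper's use of $\eps$-mild equilibria is exactly the device that avoids needing any such set-limit argument: $S_*^h$ need not converge to anything, it only needs to be an approximate equilibrium in continuous time. I would recommend switching to that route: show $S_*^h\in\Ec_\eps$ for $h$ small and conclude via Lemma \ref{lm:eps}.
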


\begin{Remark}
The exact equality may fail in general. We provide such an example in Section \ref{sec:3}. To recover the exact equality, we need to relax the equilibrium set and consider the value induced by $\eps$-equilibria in the binomial-tree case, as stated in the following result.
\end{Remark}

\begin{Theorem}\label{thm:epscts}
Let Assumptions \ref{assum:1} and \ref{a3} hold. Then
$$
\lim_{\eps\to 0+} \left(\limsup_{h\to 0+} V^h_\eps(x) \right)=\lim_{\eps\to 0+}\left( \liminf_{h\to 0+} V^h_\eps(x) \right)= V(x),\quad \forall x\in \X.
$$
\end{Theorem}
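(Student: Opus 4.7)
The plan is to establish the equality by proving the two sandwiching inequalities separately: a lower bound $\liminf_{h\to 0+} V^h_\eps(x) \geq V(x) - \phi(\eps)$ and an upper bound $\limsup_{h\to 0+} V^h_\eps(x) \leq V(x) + \psi(\eps)$, with $\phi(\eps), \psi(\eps) \to 0$ as $\eps\to 0+$. The backbone of both arguments is the uniform convergence
\[
\sup_{S\text{ closed in }\X}\ \sup_{x\in\X}\ \bigl|J^h(x,S) - J(x,S)\bigr| \longrightarrow 0 \qquad \text{as } h\to 0+,
\]
which the introduction already identifies as the key technical ingredient and which I would establish via the integral representation of $\delta$ in \eqref{e3}, a PDE/finite-difference analysis for each exponential discount factor $e^{-rt}$, and an integrability argument against $F$.

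For the lower bound, I would take the optimal mild equilibrium $S_*$ from Lemma \ref{lm.optimalmild}, so that $V(x) = J(x,S_*)$, and work with its discretization $S_*(h)\subset \X^h$. Fix $\eps>0$. Combining the uniform convergence above, the Lipschitz continuity of $f$, and the fact that $\mathrm{dist}(S_*(h), S_*)\to 0$ by \eqref{eq:S.xunf}, I would verify the two cases of \eqref{eq.def.epsequi} for $S_*(h)$. Namely, if $x\in\X^h\setminus S_*(h)$, then $x$ is $O(\sqrt{h})$-far from $S_*$ and in particular $x\notin S_*$, so $f(x)\leq J(x,S_*)\leq J^h(x,S_*(h))+\eps$ once $h$ is small. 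If instead $x\in S_*(h)$, there exists $y\in S_*\cap[x,x+d_{k,+})$ with $f(y)=J(y,S_*)$, and Lipschitzness of $f$ together with the uniform convergence yields $f(x)+\eps\geq J^h(x,S_*(h))$. Consequently $S_*(h)\in \Ec^h_\eps$ for all $h\leq h_0(\eps)$, and $V^h_\eps(x)\geq \tilde J^h(x, S_*(h))$, which in turn tends to $V(x)$ uniformly in $x$, yielding $\liminf_{h\to 0+} V^h_\eps(x)\geq V(x)-\phi(\eps)$ for a vanishing modulus $\phi$.

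For the upper bound I would argue by contradiction. Suppose there exist $x_0\in \X$, $\beta>0$, and sequences $\eps_n\downarrow 0$, $h_n\downarrow 0$, together with $\eps_n$-equilibria $S_n\in \Ec^{h_n}_{\eps_n}$, such that $\tilde J^{h_n}(x_0, S_n)\geq V(x_0)+\beta$. Closed subsets of $\R$ are sequentially compact in the Fell topology, so I can extract a subsequential Fell-limit $S$ of $(S_n)$. Using the uniform-in-$S$ convergence of $J^h$ together with the almost-sure continuity of hitting times of closed sets under the non-degenerate diffusion \eqref{e0} (guaranteed by Assumption \ref{assum:1}(ii)), I would pass to the limit in \eqref{eq.def.epsequi} with $\eps_n\downarrow 0$ and deduce that $S$ is a mild equilibrium in continuous time and that $J(x_0,S)\geq V(x_0)+\beta$. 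This contradicts $V(x_0)=\sup_{R\in\Ec} J(x_0,R)$.

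The main obstacle is this passage to the limit in the upper bound. It requires a genuine stability theory for the equilibrium condition: one must control how $S\mapsto J(\cdot,S)$ behaves along Fell-converging sets, which in turn hinges on the uniformity in $S$ of the discrete-to-continuous convergence. The subtle point is that Fell-limits may either lose points (so the limiting region is smaller) or create accumulation points, and I need to show neither of these phenomena breaks the two inequalities \eqref{eq:def.mild1}--\eqref{eq:def.mild0}; the Lipschitz continuity of $f$ and the continuity of $J(\cdot, S)$ off the boundary of $S$ should be the key ingredients that tame both pathologies.
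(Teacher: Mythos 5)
Your lower bound is essentially the paper's own argument: discretize the optimal mild equilibrium $S_*$, use the uniform convergence of $\tilde J^h$ to $J$ (Proposition~\ref{lm:main}) together with Lipschitzness of $f$ to verify $S_*(h)\in\Ec^h_\eps$ for small $h$, and conclude $\liminf_{h\to 0+}V^h_\eps(x)\geq V(x)$.

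Your upper bound, however, takes a genuinely different route via Fell compactness, and that is where the real gap lies. The pivotal step is passing $\tilde J^{h_n}(x_0,S_n)\geq V(x_0)+\beta$ to a Fell-limit $S$, which requires $J(x_0,S_n)\to J(x_0,S)$ whenever $S_n\to S$ in the Fell topology. You state this as a consequence of ``almost-sure continuity of hitting times of closed sets under the non-degenerate diffusion,'' but no such statement is proved in the paper or in your argument; it is exactly the stability theory you yourself identify as ``the main obstacle'' in the final paragraph. In one dimension it is plausible (starting from $x\notin S$, the nearest points $a_n=\sup(S_n\cap(-\infty,x))$ and $b_n=\inf(S_n\cap(x,\infty))$ do converge to their counterparts for $S$ under Fell convergence, so $\rho_{S_n}\to\rho_S$ a.s.), but a complete proof of this, plus a careful passage of the $\eps_n$-equilibrium inequalities to the limit to conclude $S\in\Ec$, would be a substantial detour.

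The paper sidesteps all of this. Instead of extracting a limiting \emph{exact} equilibrium, it shows that any $S\in\Ec^h_\eps$ with $h$ small is automatically a $3\eps$-mild equilibrium \emph{in continuous time} (this follows directly from Proposition~\ref{lm:main}, \eqref{e4}, and Lipschitz continuity of $f$: $J(x,S)\geq \tilde J^h(x(h),S)-\eps \geq f(x(h))-2\eps\geq f(x)-3\eps$ for $x\notin S$). Hence $\Ec^h_\eps\subset\Ec_{3\eps}$ and $\sup_{S\in\Ec^h_\eps}\tilde J^h(x,S)\leq V_{3\eps}(x)+\eps$, after which the proof just invokes the known stability result $\lim_{\eps\to 0+}V_\eps(x)=V(x)$ (Lemma~\ref{lm:eps}, from prior work). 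This route is more economical: it never requires compactness of the set of closed sets, nor continuity of $S\mapsto J(x,S)$, and the only new ingredient beyond the binomial-tree analysis is the continuous-time $\eps$-mild equilibrium concept, which your proposal does not use. If you want to salvage your contradiction approach, you would need to prove the Fell-continuity of $S\mapsto J(x,S)$ as a separate lemma; the paper's approach makes that unnecessary.
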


\section{Proofs of Theorems \ref{thm:semicts} and \ref{thm:epscts}}\label{sec:3}

For $r\in (0,\infty)$, define
$$J(x, S; r):= \E_x \left[ e^{-r \rho_S} f(X_{\rho_S})\right].$$
Fubini's Theorem gives that
	\begin{align}\label{eq:J.integral}
J(\cdot ,S)=\int_0^\infty J(\cdot, S; r) dF(r).
	\end{align}
The following assumption will be used for some results in the rest of this section.
\begin{Assumption}\label{a2}
$\mu,\sigma,f$ are bounded and Lipschitz continuous, and $\inf_{x\in\X}\sigma(x)>0$.
\end{Assumption}

\begin{Lemma}\label{lm:J.r}
The following statements hold with $C_0$ being a finite constant independent of $r$.
\bi 
\item[(a)] If Assumption \ref{a2} holds, then
$$
\sup_{S \text{ closed}} \|J(x, S; r)\|_{\Cc^2(\X\setminus S)} \leq C_0 (1+r),\quad \forall r\in(0,\infty).
$$
\item[(b)] If Assumption \ref{assum:1} holds, then
$$
\sup_{S \text{ closed }}\|J(x, S; r)\|_{\Cc^4(\X\setminus S)}\leq  C_0(1+r^2),\quad \forall r\in(0,\infty).
$$
\ei 
\end{Lemma}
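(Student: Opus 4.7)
\textbf{Plan and $L^{\infty}$ bound.} The plan is to exploit the one-dimensional setting: on each connected component of $\X\setminus S$ the function $u(x):=J(x,S;r)$ solves an explicit linear second-order ODE, and the two required derivative estimates reduce to ODE/spectral analysis that is uniform in the component and in $S$. By the strong Markov property, on every connected component $(a,b)\subset\X\setminus S$,
\begin{equation*}
\frac{\sigma^{2}(x)}{2}u''(x)+\mu(x)u'(x)-r\,u(x)=0,
\end{equation*}
with boundary values $u(a+)=f(a)$, $u(b-)=f(b)$ when $a,b$ are finite, and decay at $\pm\infty$ (under Assumption \ref{a2}, Feller's test precludes accessibility of $\pm\infty$, so $e^{-r\rho_{S}}\to 0$ on $\{\rho_{S}=\infty\}$). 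The maximum principle then gives $\|u\|_{\infty}\le \|f\|_{\infty}$.

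\textbf{Key derivative estimate.} Let $\varphi_{r},\psi_{r}>0$ be the classical increasing/decreasing fundamental solutions of $\tfrac{1}{2}\sigma^{2}\phi''+\mu\phi'=r\phi$. On each component the standard probabilistic representation gives $u(x)=f(a)\,p_{a}(x)+f(b)\,p_{b}(x)$, where $p_{a},p_{b}\in[0,1]$ are the discounted hitting probabilities of the two endpoints, each an explicit bilinear combination of $\varphi_{r},\psi_{r}$ evaluated at $a,b,x$. Differentiating these expressions reduces the bound on $u'$ to controlling the logarithmic derivatives
\begin{equation*}
g(x):=\varphi_{r}'(x)/\varphi_{r}(x)\ge 0,\qquad h(x):=-\psi_{r}'(x)/\psi_{r}(x)\ge 0,
\end{equation*}
each of which satisfies a Riccati equation such as $g'=2r/\sigma^{2}-(2\mu/\sigma^{2})g-g^{2}$. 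Comparing with the algebraic equilibrium $g^{*}:=-\mu/\sigma^{2}+\sqrt{\mu^{2}/\sigma^{4}+2r/\sigma^{2}}=O(1+\sqrt{r})$, uniform in $x$ under Assumption \ref{a2}, and using that $g'<0$ whenever $g$ substantially exceeds $g^{*}$, gives $\sup_{x}g(x),\sup_{x}h(x)\le C(1+\sqrt{r})$. Combined with the explicit formulas for $p_{a},p_{b}$ and the Lipschitz bound on $f$, a short case analysis (depending on whether $r(b-a)^{2}$ is large or small, to handle the possible cancellation when $a,b$ are close) yields $|u'(x)|\le C(1+\sqrt{r})$. Feeding this back into the ODE gives $|u''(x)|=2|ru-\mu u'|/\sigma^{2}\le C(1+r)$, proving part (a).

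\textbf{Iteration to $\Cc^{4}$ and main obstacle.} For part (b), Assumption \ref{assum:1} supplies $\|\mu\|_{\Cc^{2}}+\|\sigma\|_{\Cc^{2}}<\infty$, so differentiating the ODE once and twice expresses $u^{(3)},u^{(4)}$ as rational combinations of $u,u',u''$ and $\mu,\sigma,\mu',\sigma',\mu'',\sigma''$. A short induction then yields $|u^{(k)}(x)|\le C(1+r^{k/2})$ for $k\le 4$, and the $k=4$ case gives the claimed $\|u\|_{\Cc^{4}(\X\setminus S)}\le C_{0}(1+r^{2})$. The delicate point throughout is making the Riccati bound on $g,h$ (and the resulting control of $u'$) genuinely uniform over all closed $S$, in particular near $\partial S$ where $u$ must match merely Lipschitz data and on unbounded components where one boundary condition degenerates. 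A naive interior Schauder estimate would give the correct powers of $r$ only on compact sub-intervals, with constants that deteriorate as one approaches $\partial S$; the ODE/Riccati route is what delivers the $S$-independent constant $C_{0}$, which is exactly what is needed for the Fubini step \eqref{eq:J.integral} motivating the lemma.
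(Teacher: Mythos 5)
Your overall plan is sound and, for part (a), takes a genuinely different route from the paper. The paper's proof of (a) is a one-line appeal to the $C^2$ a priori estimate established in \cite[proof of Lemma 3.6, Step 2]{bwz-2022} for the Dirichlet problem \eqref{eq:lmunif.1}; it does not construct fundamental solutions or analyse Riccati dynamics. Your approach instead solves the one-dimensional boundary value problem explicitly via the increasing/decreasing solutions $\varphi_r,\psi_r$ and bounds their logarithmic derivatives by a Riccati comparison with the algebraic root $g^*=O(1+\sqrt{r})$. This is self-contained and in fact delivers the sharper intermediate estimate $|u^{(k)}|\le C(1+r^{k/2})$, whereas the paper only records $\|u\|_{\Cc^2}\le C_0(1+r)$ and feeds that (looser) bound into the iteration. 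For part (b) the two proofs coincide: both differentiate \eqref{eq:lmunif.1} and use the previously obtained bounds together with $\|\mu\|_{\Cc^2},\|\sigma\|_{\Cc^2}<\infty$; you should, however, note that the paper's iteration starting from $|u'|,|u''|\le C(1+r)$ already closes, so your sharper gradient bound is not logically necessary for the stated result.

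The one step in your write-up that deserves to be spelled out rather than asserted is the ``short case analysis'' producing $|u'|\le C(1+\sqrt{r})$ on a bounded component $(a,b)$. The cancellation you flag is real, but the delicate case is not only when $a,b$ are close: one must also handle the situation where $b-a$ is of order one or large and $x$ is near one endpoint, where $p_a'+p_b'$ can be of size $\sqrt{r}$ while $|f(b)-f(a)|\,|p_b'|$ could naively grow like $(b-a)\sqrt{r}$. The fix is to localise: write $u=f(b)p_b+\bigl[f(a)-f(b)\bigr]p_a+f(b)(p_a-1+1-p_b)$, or more simply compare $u$ to the constant $f(x_0)$ for $x_0$ the nearer endpoint, so that the Lipschitz increment of $f$ is measured over a distance at which $p_a$ or $p_b$ is not exponentially small. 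With that localisation the two regimes $r(b-a)^2\lesssim 1$ and $r(b-a)^2\gg 1$ indeed each give $|u'|\le C(1+\sqrt{r})$ uniformly in $a,b,x$, which is exactly the $S$-independence the lemma needs. Making this explicit would close the only soft spot in an otherwise correct and somewhat more informative argument than the one in the paper.
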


\begin{proof}
(a) For any closed set $S$ and $r>0$, $J(x,S; r)$ solves the {\color{black}following PDE}
\be\label{eq:lmunif.1} 
\begin{cases}
-r v(x)+\mu(x)v_x(x)+\frac{1}{2} \sigma^2(x) v_{xx}(x)=0 \quad \forall x\in \X\setminus S,\\
v(x)=f(x)\quad x\in \partial S.
\end{cases}
\ee 
Then an argument similar to that in \cite[proof of Lemma 3.6, Step 2]{bwz-2022} gives part (a).

(b) Differentiating the elliptic equation in \eqref{eq:lmunif.1} gives that 
\be\label{eq:lmunif.3} 
\frac{1}{2} \sigma^2(x) v^{(3)}(x)+\sigma(x)\sigma_x(x)v^{(2)}(x)+\mu_x(x) v_x +\mu v^{(2)}-r v_x = 0.
\ee 
This together with part (a) and Assumption \ref{assum:1} implies that 
$$
\sup_{S \text{ closed }}\|v^{(3)}\|_{{\color{black}\Cc^0(\X\setminus S)}}\leq C_0 (1+r^2).
$$
The a similar argument by differentiating \eqref{eq:lmunif.3} gives part (b).
\end{proof}

\begin{Corollary}\label{lm:uniflip}
	Let Assumption \ref{assum:1} hold. Then there exists constant $L>0$ such that 
	$$
	\sup_{S \text{\,closed},\, x\neq y} \frac{|J(x, S)-J(y, S)|}{|x-y|} \leq L.
	$$
\end{Corollary}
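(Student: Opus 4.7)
The plan is to apply the integral representation \eqref{eq:J.integral} and bound each $|J(x,S;r)-J(y,S;r)|$ by a constant depending linearly on $r$ and uniform in the closed set $S$, then integrate against $dF(r)$ using the finiteness of $\int_0^\infty r\,dF(r)$ required after \eqref{e3}.

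Concretely, let $L_f$ be the Lipschitz constant of $f$ from Assumption \ref{assum:1}(i) and set $M_r:=L_f\vee C_0(1+r)$, with $C_0$ from Lemma \ref{lm:J.r}(a). For every closed $S\subset\X$ and every $r>0$, I would show that $x\mapsto J(x,S;r)$ is $M_r$-Lipschitz on $\X$. Indeed, on $S$ we have $J(\cdot,S;r)\equiv f(\cdot)$: Assumption \ref{assum:1}(ii) and the one-dimensional geometry force $\rho_S=0$ a.s.\ when $X_0\in S$ (as recalled in Remark \ref{rm:def.equilibrium}), so the restriction is $L_f$-Lipschitz. On the open set $\X\setminus S$, Lemma \ref{lm:J.r}(a) yields $|J_x(\cdot,S;r)|\leq C_0(1+r)$, so by the mean value theorem $J(\cdot,S;r)$ is $C_0(1+r)$-Lipschitz on each connected component of $\X\setminus S$. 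Moreover $J(\cdot,S;r)$ is continuous on all of $\X$: $\inf_\X\sigma>0$ makes every point of $\partial S$ regular for the complement (standard 1D stopping theory).

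To stitch these two local estimates into a global one I would use the elementary fact: if $g:\R\to\R$ is continuous, $K\subset\R$ is closed, and $g$ is $M$-Lipschitz on $K$ as well as on every connected component of $\R\setminus K$, then $g$ is $M$-Lipschitz on $\R$. Given $u<v$, if $K\cap[u,v]=\emptyset$ the estimate is immediate; otherwise set $a:=\min(K\cap[u,v])$, $b:=\max(K\cap[u,v])$. Then $[u,a)$ and $(b,v]$ each lie in a single connected component of $\R\setminus K$, so continuity of $g$ at $a,b$ promotes the component estimate to $|g(a)-g(u)|\leq M(a-u)$ and $|g(v)-g(b)|\leq M(v-b)$, while $a,b\in K$ gives $|g(a)-g(b)|\leq M(b-a)$; the triangle inequality then yields $|g(u)-g(v)|\leq M(v-u)$. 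Applied with $K=S$, $g=J(\cdot,S;r)$, and $M=M_r$, this delivers the desired $M_r$-Lipschitz bound uniformly in closed $S$.

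Putting everything together, \eqref{eq:J.integral} and the triangle inequality give
\bee
|J(x,S)-J(y,S)|\leq \int_0^\infty |J(x,S;r)-J(y,S;r)|\,dF(r)\leq |x-y|\int_0^\infty M_r\,dF(r),
\eee
and $\int_0^\infty M_r\,dF(r)\leq L_f+C_0+C_0\int_0^\infty r\,dF(r)<\infty$ by the integrability assumption on $F$, so $L:=L_f+C_0+C_0\int_0^\infty r\,dF(r)$ works. The only non-routine step is the stitching argument of the third paragraph; it has to be handled with care because $S$ is an arbitrary closed set and can have a complicated structure (e.g., Cantor-like), which rules out any finite decomposition of $[u,v]$ into $S$- and $S^c$-pieces. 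Continuity of $J(\cdot,S;r)$ on $\X$, combined with the 1D topology, is precisely what makes the three-piece inequality go through, and the linear-in-$r$ nature of $C_0(1+r)$ from Lemma \ref{lm:J.r}(a) is precisely what makes the final integration meet the available integrability of $F$.
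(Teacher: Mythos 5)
Your proposal is correct and follows essentially the same route as the paper: integrate the bound $\|J(\cdot,S;r)\|_{\Cc^2(\X\setminus S)}\leq C_0(1+r)$ from Lemma \ref{lm:J.r}(a) against $dF(r)$ (using $\int_0^\infty r\,dF(r)<\infty$), and combine it with the Lipschitz bound on $f=J(\cdot,S)\,$ on $S$. The paper compresses the final step to ``this together with the Lipschitz continuity of $f$ implies the result,'' and your three-point stitching lemma is precisely the content it leaves implicit; swapping the order of the $dF$-integration and the stitching is a cosmetic difference.
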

\begin{proof}
 By Lemma \ref{lm:J.r}(a), \eqref{eq:J.integral} and the assumption $\int_0^\infty rdF(r)<\infty$, we have that
$$
\sup_{S \text{ closed }}\|J(\cdot, S)\|_{\Cc^2(\X\setminus S)}\leq \int_0^\infty \left(\sup_{S \text{ closed }}\|J(\cdot, S; r)\|_{\Cc^2(\X\setminus S)} \right) dF(r)\leq C_0\int_0^\infty(1+r)dF(r)<\infty. 
$$
This together with the Lipschitz continuity of $f$ implies the result.
\end{proof}

\begin{Lemma}\label{lm:J.sqrth}
Let Assumption \ref{a2} hold. Then
\bee 
\sup_{S\subset\X \text{ closed}} \| J(\cdot, S(h); r) -J(\cdot, S; r) \|_\infty\leq C(1+r) \sqrt{h},\quad \forall h>0,
\eee 
where $C$ is a finite constant independent of $h$.
\end{Lemma}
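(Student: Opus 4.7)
The plan is to compare $w(x):=J(x,S;r)$ and $u(x):=J(x,S(h);r)$ by applying the strong Markov property at the first hitting time of $S\cup S(h)$, reducing a global comparison to a pointwise comparison at the common boundary. Since $S(h)\subset \X^h$ is a subset of a uniformly discrete grid (by Assumption \ref{a3}), it is closed in $\X$, so Lemma \ref{lm:J.r}(a) applies to both $S$ and $S(h)$ to give
$$\|w\|_{\Cc^2(\X\setminus S)}\leq C_0(1+r),\qquad \|u\|_{\Cc^2(\X\setminus S(h))}\leq C_0(1+r).$$

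Let $\rho:=\inf\{t>0:X_t\in S\cup S(h)\}$. By the strong Markov property combined with the boundary conditions $w=f$ on $S$ and $u=f$ on $S(h)$, I would write
$$w(x)=\E_x\!\left[e^{-r\rho}w(X_\rho)\right],\qquad u(x)=\E_x\!\left[e^{-r\rho}u(X_\rho)\right],$$
so that $|w(x)-u(x)|\leq \E_x\bigl[e^{-r\rho}|w(X_\rho)-u(X_\rho)|\bigr]$. Hence it suffices to prove the uniform pointwise bound $|w(y)-u(y)|\leq C(1+r)\sqrt{h}$ for every $y\in S\cup S(h)$.

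For this pointwise bound, suppose first $y\in S$, so $w(y)=f(y)$ and I must control $|u(y)-f(y)|$. By construction of $S(h)$ there is an index $k$ with $y\in[x_k^h,x_{k+1}^h)$; then $x_k^h\in S(h)$, $|y-x_k^h|\leq c_2\sqrt{h}$ by \eqref{eq100}, and the open interval $(x_k^h,y)$ contains no grid points, hence no points of $S(h)$. The $\Cc^2(\X\setminus S(h))$-bound on $u$ therefore gives $|u(y)-u(x_k^h)|\leq C_0(1+r)c_2\sqrt{h}$, and combined with $u(x_k^h)=f(x_k^h)$ and the Lipschitz continuity of $f$ from Assumption \ref{a2} I obtain $|u(y)-f(y)|\leq C(1+r)\sqrt{h}$. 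The case $y\in S(h)\setminus S$ is symmetric: picking $z=\inf([x_k^h,x_{k+1}^h)\cap S)$ (which lies in $S$ since $S$ is closed and the intersection is nonempty) the interval $[y,z)$ is disjoint from $S$, so the same argument applied to $w$ gives $|w(y)-f(y)|\leq C(1+r)\sqrt{h}$.

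The final step is to combine these bounds with $\E_x[e^{-r\rho}]\leq 1$ and take the supremum over $x$ and over closed $S$, noting that $C_0$ in Lemma \ref{lm:J.r}(a) is independent of both $r$ and the underlying closed set. The main delicate point will be Step 3, where I must check that the small interval between $y$ and the nearby boundary point contains no additional boundary points, so that the $\Cc^2$-bound on a single continuation component applies; this is handled cleanly by the grid structure and by choosing $z$ as the infimum above. Nothing else requires more than routine use of PDE regularity and the strong Markov property.
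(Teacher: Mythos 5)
Your proof is correct and follows essentially the same approach as the paper: both use the strong Markov property at $\rho = \rho_S\wedge\rho_{S(h)}$ (the first hitting time of $S\cup S(h)$) to reduce the global comparison to a pointwise one at the common boundary, and both establish the boundary bound by combining the $O(\sqrt{h})$ proximity of $S$ and $S(h)$ with the uniform $\Cc^2$ bound from Lemma~\ref{lm:J.r}(a) and the Lipschitz continuity of $f$. The only difference is purely presentational (you phrase the reduction as a supremum over boundary points, the paper writes out the two indicator terms explicitly), so the two proofs are interchangeable.
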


\begin{proof}
Let $h,r>0$ and $S\subset\X$ be a closed set. For $x\in\X$, we have that
\begin{align*}
\| J(\cdot, S(h); r) -J(\cdot, S; r) \|=&\bigg|\E_x\bigg[\delta(\rho_{S(h)})1_{\{\rho_{S(h)}<\rho_S\}}\left(f(X_{\rho_{S(h)}})-J(X_{\rho_{S(h)}},S;r)\right)\\
&+\delta(\rho_{S})1_{\{\rho_{S(h)}>\rho_S\}}\left(J(X_{\rho_{S}},S(h);r)-f(X_{\rho_{S}})\right)\bigg]\bigg|
\end{align*}
Note that for any $y\in S$ there exists $y'\in S(h)$ such that $|y-y'|\leq O(\sqrt{h})$ and vice versa. Then by Lemma \ref{lm:J.r}(a) and the Lipschitz continuity of $f$, there exists $C$ independent of $h,r,S,x$ such that
$$\left|f \left( X_{\rho_{S(h)}} \right)-J \left( X_{\rho_{S(h)}},S;r \right)\right|+\left|J \left( X_{\rho_{S}},S(h);r \right) - f \left( X_{\rho_{S}} \right)\right|\leq C(1+r)\sqrt{h}.$$
The result follows.
\end{proof}

For a closed set $S\subset \X$, $r\in (0,\infty)$ and $x\in\X$, define
\be  
\begin{aligned}
	J^h(x, S; r)=\E^h_{x(h)}\left[ (1+rh)^{-\rho_S(h)} f(X^h_{\rho_S(h)}) \right]\quad \text{and}\quad \Jtilde^h(x, S; r)=\E_{x(h)}\left[ (1+rh)^{-\rhotilde_S(h)} f(X^h_{\rhotilde_S(h)}) \right].
\end{aligned}
\ee 
{\color{black}By Fubini's Theorem again, we have
$$
J^h(x, S)=\int_0^\infty J^h(x, S; r) dr\quad\text{and}\quad \tilde J^h(x, S)=\int_0^\infty \tilde J^h(x, S; r) dr.
$$}

\begin{Lemma}\label{lm:J.Jh}
Let Assumptions \ref{assum:1} and \ref{a3} hold. Then for $h> 0$ and $r>0$ we have that
\begin{align}
\sup_{S\subset\X \text{ closed}} \| \tilde J^h(\cdot, S; r)-J(\cdot, S(h); r) \|_\infty\leq C \frac{1+r^2}{r} \sqrt{h},\label{eq:lm.Jh}
\end{align}
where $C$ is a finite constant independent of $h$ and $r$.
\end{Lemma}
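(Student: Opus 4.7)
The plan is to treat this as a consistency-plus-stability estimate for the natural finite-difference discretization of the elliptic problem solved by $v(x):=J(x,S(h);r)$. Fix $h,r>0$ and a closed $S\subset\X$, and write $u^h(y):=\tilde J^h(y,S;r)$ for $y\in\X^h$. Because $\tilde J^h(x,S;r)=u^h(x(h))$, and Remark \ref{rm:def.equilibrium} (combined with Assumption \ref{assum:1}(ii)) forces $v=f$ on $S(h)$, the triangle inequality
$$
\bigl|\tilde J^h(x,S;r)-v(x)\bigr|\leq \bigl|u^h(x(h))-v(x(h))\bigr|+\bigl|v(x(h))-v(x)\bigr|,
$$
together with Lemma \ref{lm:J.r}(a) and Assumption \ref{a3} (which give $|v(x(h))-v(x)|\leq C(1+r)\sqrt h$, subsumed in the target bound), reduces the problem to estimating $\sup_{y\in\X^h}|u^h(y)-v(y)|$.

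For the interior analysis, $u^h$ satisfies the exact discrete Bellman relation
$$
(1+rh)u^h(x_k^h)=p^h_{k,+}u^h(x_{k+1}^h)+p^h_{k,-}u^h(x_{k-1}^h)
$$
on $\X^h\setminus S(h)$ with $u^h=f$ on $S(h)$, whereas $v$ solves the elliptic PDE \eqref{eq:lmunif.1} on $\X\setminus S(h)$ with $v=f$ on $S(h)$. A direct computation using \eqref{eq:discrete.P} together with the identity $d_{k,+}d_{k,-}=\sigma^2(x_k^h)h$ produces the moment identities
$$
p^h_{k,+}d_{k,+}-p^h_{k,-}d_{k,-}=\mu(x_k^h)h,\quad p^h_{k,+}d_{k,+}^2+p^h_{k,-}d_{k,-}^2=\sigma^2(x_k^h)h+\mu(x_k^h)h(d_{k,+}-d_{k,-}).
$$
Taylor-expanding $v$ at $x_k^h$ over the stencil $\{x_{k-1}^h,x_k^h,x_{k+1}^h\}$ up to order four, which is legitimate since Lemma \ref{lm:J.r}(b) yields $\|v\|_{\Cc^4(\X\setminus S(h))}\leq C(1+r^2)$ uniformly in $S$, and substituting via the PDE, I expect to obtain
$$
(1+rh)v(x_k^h)=p^h_{k,+}v(x_{k+1}^h)+p^h_{k,-}v(x_{k-1}^h)+E(x_k^h),\quad \|E\|_\infty\leq C(1+r^2)h^{3/2},
$$
where $E$ aggregates the drift-induced correction $\tfrac12 v''\mu(x_k^h)h(d_{k,+}-d_{k,-})=O(h^{3/2})(1+r)$ from Assumption \ref{a3}, the odd third-order term $\tfrac16 v'''(p^h_{k,+}d_{k,+}^3-p^h_{k,-}d_{k,-}^3)=O(h^{3/2})(1+r^2)$, and the Lagrange remainder $O(h^2)(1+r^2)$.

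For stability, setting $w:=u^h-v$ gives a bounded solution on $\X^h$ to the linear discrete equation $(1+rh)w(x_k^h)=p^h_{k,+}w(x_{k+1}^h)+p^h_{k,-}w(x_{k-1}^h)+E(x_k^h)$ on $\X^h\setminus S(h)$ with $w=0$ on $S(h)$. Iterating backward along the Markov chain $X^h$ up to the hitting time $\tilde\rho_S(h)$, the discrete Feynman--Kac representation
$$
w(y)=\E^h_y\biggl[\sum_{j=0}^{\tilde\rho_S(h)-1}(1+rh)^{-(j+1)}E(X^h_j)\biggr],\quad y\in\X^h,
$$
together with the geometric sum $\sum_{j\geq 0}(1+rh)^{-(j+1)}=1/(rh)$, gives $\|w\|_\infty\leq C(1+r^2)h^{3/2}/(rh)=C(1+r^2)\sqrt h/r$, and combining with the first paragraph yields \eqref{eq:lm.Jh}. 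The main obstacle is the consistency step: the crucial $\sqrt h$ improvement over the naive $O(1)$ residual requires the cancellation of odd-order Taylor terms and Assumption \ref{a3}'s control on $d_{k,+}-d_{k,-}$, with a secondary subtlety when the stencil crosses $\partial S(h)$, handled by the uniform $\Cc^4$ bound of Lemma \ref{lm:J.r}(b) together with the continuity of $v$, so that Taylor with Lagrange remainder extends to the boundary values $v=f$ on $S(h)$.
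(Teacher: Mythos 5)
Your proposal is correct and takes essentially the same consistency-plus-stability route as the paper's proof: you verify a local truncation error of order $h^{3/2}(1+r^2)$ using the $\Cc^4$ bound of Lemma \ref{lm:J.r}(b) (where the paper expresses this as finite-difference approximations of $J_x,J_{xx}$ with errors $O(\sqrt h)(1+r)$ and $O(\sqrt h)(1+r^2)$, you instead Taylor-expand the discrete Bellman equation via the two moment identities — the two are algebraically equivalent), and then divide out a factor $rh$ to get the $\frac{1+r^2}{r}\sqrt h$ bound. The only cosmetic difference is in the stability step, where you invoke the discrete Feynman--Kac representation $w(y)=\E^h_y\bigl[\sum_{j<\tilde\rho_S(h)}(1+rh)^{-(j+1)}E(X^h_j)\bigr]$ while the paper takes a supremum over grid points and solves $(1+rh)\sup D\leq \sup D + O(h^{3/2})(1+r^2)$; both amount to the same maximum-principle estimate.
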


\begin{proof}
Throughout the proof, $C>0$ is a constant independent of $h, r$ and may vary from line to line.  
Fix  $h, r>0$. Take $x^h_k$. If $x^h_k\in S(h)$, then $\Jtilde^h(x^h_k, S; r)=f(x^h_k)=J(x^h_k, S(h); r)$. Now assume $x^h_k\notin \overline S(h)$. We apply the finite difference method in this case. Set $d_k:= d_{k,+}+d_{k,-}$.  We have that 
\begin{align*}
J_x(x^h_k, S(h); r)=& \frac{J(x^h_{k+1}, S(h); r)-J(x^h_{k-1}, S(h); r) }{d_k}+ (1+r)O(\sqrt{h}), \\
J_{xx}(x^h_k, S(h); r)=& \frac{d_{k,+}J(x^h_{k-1}, S(h); r)+d_{k,-}J(x^h_{k+1}, S(h); r)-d_kJ(x^h_{k}, S(h); r) }{ \frac{1}{2}d_kd_{k,+}d_{k,-}}+ (1+r^2)O(\sqrt{h}),
\end{align*}
where the $O(\sqrt{h})$ terms are uniform in $x_k^h$, $r$, $S$ thanks to Assumption \ref{a3} and {\color{black}Lemma \ref{lm:J.r}(b)}. Then \eqref{eq:lmunif.1} and the above estimates give that
\be\label{eq:lm.difference1}  
\begin{aligned}
&-r J(x^h_k, S(h); r)+\mu(x^h_k)\frac{J(x^h_{k+1}, S(h); r)-J(x^h_{k-1}, S(h); r) }{d_k}\\
&+{\color{black}\frac{1}{2}\sigma^2(x^h_k)}\frac{d_{k,-}J(x^h_{k+1}, S(h); r)+d_{k,+}J(x^h_{k-1}, S(h); r)-d_kJ(x^h_{k}, S(h); r) }{ \frac{1}{2}d_kd_{k,+}d_{k,-}}+(1+r^2)O(\sqrt{h})=0.
\end{aligned}
\ee 
As $x^h_k\notin S(h)$, \eqref{eq:discrete.P} tells that 
$$
\Jtilde^h(x^h_k, S; r)=(1+r h)^{-1} \left[ p^h_{k,+} \Jtilde^h(x^h_{k+1}, S; r)+ p^h_{k,-} \Jtilde^h(x^h_{k-1}, S; r) \right].
$$
This together with \eqref{eq:discrete.x} implies that 
\be\label{eq:lm.difference3}  
\begin{aligned}
	&-r \Jtilde^h(x^h_k, S; r)+\mu(x^h_k)\frac{ \Jtilde^h(x^h_{k+1}, S; r)- \Jtilde^h(x^h_{k-1}, S; r) }{d_k}\\
	&+\frac{1}{2}\sigma^2(x^h_k)\frac{ d_{k,-}\Jtilde^h(x^h_{k+1}, S; r)+ d_{k,+}\Jtilde^h(x^h_{k-1}, S; r)-d_k \Jtilde^h(x^h_{k}, S; r) }{ \frac{1}{2}d_kd_{k,+}d_{k,-}}=0.
\end{aligned}
\ee 
Then \eqref{eq:lm.difference1} minus \eqref{eq:lm.difference3} gives that 
\bee
\begin{aligned}
	-r D_k+\mu(x^h_k)\frac{D_{k+1}-D_{k-1} }{d_k}
	+\frac{1}{2}\sigma^2(x^h_k)\frac{d_{k,-}D_{k+1}+d_{k,+}D_{k-1}-d_kD_k }{ \frac{1}{2}d_kd_{k,+}d_{k,-}}+(1+r^2)O(\sqrt{h})=0,
\end{aligned}
\eee 
where $D_j:=|J(x^h_{j}, S(h); r)-\Jtilde^h(x^h_{j}, S; r)|$ for $j=k-1, k, k+1$. Then
\begin{align*}
(1+r h) D_k= &p^h_{k,+}D_{k+1}+p^h_{k,-}D_{k-1}+(1+r^2) O(h^\frac{3}{2})\\
\leq & (p^h_{k,+}+p^h_{k,-})\sup_{x^h_i\notin S(h)} |J(x^h_{i}, S; r)-\Jtilde^h(x^h_{i}, S; r)|+(1+r^2) O(h^\frac{3}{2}).
\end{align*}
Note the above holds for any $x^h_k\notin S(h)$. As a result,
$$
(1+r h) \sup_{x^h_i\notin S(h)} |J(x^h_{i}, S(h); r)-\Jtilde^h(x^h_{i}, S; r)|
\leq \sup_{x^h_i\notin S(h)} |J(x^h_{i}, S(h); r)-\Jtilde^h(x^h_{i}, S; r)|+(1+r^2) O(h^\frac{3}{2}),
$$
and thus
\be\label{eq:lm.difference5} 
\sup_{x^h_i\notin S(h)} |J(x^h_{i}, S(h); r)-\Jtilde^h(x^h_{i}, S; r)|\leq \frac{1+r^2}{r} O(\sqrt{h}).
\ee 
Notice that $\Jtilde^h(x, S; r)= \Jtilde^h(x^h_{i}, S; r)$ for all $x\in[x^h_i, x^h_{i+1})$, and $J(\cdot, S; r)$ has a uniform Lipschitz constant $C_0 (1+r)$ over all closed set $S$ due to Lemma \ref{lm:J.r}(a) and the Lipschitz continuity of $f$. Hence, \eqref{eq:lm.difference5}  implies \eqref{eq:lm.Jh}.
\end{proof}

\begin{Proposition}\label{lm:main}
	Let Assumptions \ref{assum:1} and \ref{a3} hold. Then there exists a constant $C>0$ independent of $h$ such that
	$$\sup_{S \text{ closed }} \| \tilde J^h(\cdot, S)-J(\cdot, S)\|_{\infty}\leq C \sqrt{h}.$$
\end{Proposition}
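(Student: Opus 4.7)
The approach is to reduce the uniform approximation error for the $F$-weighted discount to a family of pure-exponential discounting problems parametrized by $r$, and then integrate in $r$. By the Fubini representations established just above the statement, one has
\begin{equation*}
\tilde J^h(x,S) - J(x,S) \;=\; \int_0^\infty \big[\tilde J^h(x,S;r) - J(x,S;r)\big]\,dF(r),
\end{equation*}
so it suffices to control the integrand uniformly in $x\in\X$ and in closed $S\subset\X$, and then use the integrability assumption $\int_0^\infty r\,dF(r)<\infty$.

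For each fixed $r>0$, I would introduce the intermediate quantity $J(x,S(h);r)$---the continuous-time value with the same rate $r$ but with the discretized stopping region---and apply the triangle inequality. Lemma \ref{lm:J.sqrth} bounds the domain-perturbation piece $|J(x,S(h);r) - J(x,S;r)|$ by $C(1+r)\sqrt{h}$, while Lemma \ref{lm:J.Jh} bounds the finite-difference piece $|\tilde J^h(x,S;r) - J(x,S(h);r)|$ by $C\frac{1+r^2}{r}\sqrt{h}$, both uniformly over closed $S$. The $(1+r)\sqrt{h}$ part and the $r\sqrt{h}$ part integrate cleanly against $dF$ thanks to $\int r\,dF(r)<\infty$, producing an $O(\sqrt{h})$ contribution to the final bound.

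The main obstacle is the $1/r$ singularity of the finite-difference bound as $r\downarrow 0$, which is not integrable against a generic cumulative distribution $F$. To deal with it I would combine the pointwise estimate with the trivial a priori bound $|\tilde J^h(x,S;r) - J(x,S;r)|\le 2\|f\|_\infty$ and split the $r$-integral at a threshold $r_\ast=r_\ast(h)\to 0$. On $\{r\ge r_\ast\}$ one uses Lemma \ref{lm:J.Jh}; the $r\sqrt{h}$ piece is absorbed by $\int r\,dF<\infty$, and the $\sqrt{h}/r$ piece is controlled via $\int_{r_\ast}^\infty r^{-1}\,dF(r)\le 1/r_\ast$. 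On $\{r<r_\ast\}$ one uses the trivial bound, contributing $2\|f\|_\infty F(r_\ast)$; this vanishes because $F$ has no atom at $0$ (forced by $\lim_{t\to\infty}\delta(t)=0$). Choosing $r_\ast$ so that $F(r_\ast)$ and $\sqrt{h}/r_\ast$ are both $O(\sqrt{h})$ then yields the stated rate. The delicate point is the balance in this last step: obtaining a genuine $\sqrt{h}$ rate (rather than just convergence) requires the right modulus of continuity of $F$ at $0$ combined with the boundedness of $f$, and this is where one should expect the argument to be most sensitive to the hypotheses in Assumption \ref{assum:1} and the discount-function structure \eqref{e3}.
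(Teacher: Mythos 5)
Your decomposition and splitting strategy coincide with the paper's: Fubini in $r$, the triangle inequality through the intermediate $J(\cdot,S(h);r)$, Lemma \ref{lm:J.sqrth} for the domain-perturbation error, Lemma \ref{lm:J.Jh} for the finite-difference error, and a threshold split in $r$ to tame the $1/r$ singularity. The only cosmetic difference is that the paper fixes a threshold $r_0$ for each prescribed $\eps>0$ (choosing $r_0$ with $F(r_0)\le\eps$, which is possible since $F(0)=0$ follows from $\lim_{t\to\infty}\delta(t)=0$) and then lets $h\to 0$, rather than tying $r_\ast$ to $h$.

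Your closing concern is well-founded, and in fact it applies to the paper's own argument. As written, the paper's proof only shows
\[
\sup_{S \text{ closed}}\|\tilde J^h(\cdot,S)-J(\cdot,S)\|_\infty\le(2\|f\|_\infty+1)\eps
\quad\text{for all $h$ sufficiently small},
\]
that is, the supremum tends to $0$ as $h\to 0+$, but \emph{not} the advertised $C\sqrt{h}$ rate. Your observation is exactly the obstruction: demanding both $F(r_\ast)=O(\sqrt{h})$ and $\sqrt{h}/r_\ast=O(\sqrt{h})$ forces $r_\ast$ bounded away from $0$, which in turn requires $F\equiv 0$ on a neighborhood of $0$. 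To genuinely obtain $O(\sqrt{h})$ one would need an additional hypothesis such as $\int_0^\infty r^{-1}\,dF(r)<\infty$ (equivalently $\int_0^\infty\delta(t)\,dt<\infty$), which makes the $\sqrt{h}/r$ contribution integrable directly and dispenses with the split. Fortunately, only the convergence (not the rate) is invoked in the proofs of Theorems \ref{thm:semicts} and \ref{thm:epscts}, so the discrepancy is an overstatement in the proposition's wording rather than a gap in the main results; your proposal, read as establishing $\sup_S\|\tilde J^h(\cdot,S)-J(\cdot,S)\|_\infty\to 0$, matches the paper's actual proof.
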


\begin{proof}
By combining Lemma \ref{lm:J.sqrth} with Lemma \ref{lm:J.Jh}, we have that
\bee 
\sup_{S \text{ closed}}\left\| \tilde J^h (\cdot, S; r) - J(\cdot, S;r)\right\|_{^\infty} \leq C\frac{1+r^2}{r} \sqrt{h},\quad \forall r>0,
\eee 
for some constant $C>0$ independent of $r,h$. Let $\eps>0$, As $\lim_{t\to\infty} \delta(t) = 0$, $F(0) = 0$. By the right-continuity of $F$, there exists $r_0 > 0$ such that $F(r_0)\leq \eps$. Then 
\begin{align*}
&\sup_{S \text{ closed}}\left\| \tilde J^h (\cdot, S) - J(\cdot, S)\right\|_{^\infty} \leq \left(\int_0^{r_0}+\int_{r_0}^\infty\right)\sup_{S \text{ is closed}}\left\| \tilde J^h (\cdot, S; r) - J(\cdot, S;r)\right\|_{^\infty}  dF(r) \\
&\leq \|f\|_{\infty} \eps+ \sqrt{h}C\int_{r_0}^\infty\frac{1+r^2}{r} dF(r)\leq \|f\|_{\infty} \eps+ \sqrt{h}C\left(\frac{1}{r_0}+\int_{0}^\infty r dF(r)\right).
\end{align*}
Then for $h>0$ small enough,
$$
\sup_{S \text{ closed}}\left\| \tilde J^h (\cdot, S) - J(\cdot, S)\right\|_{^\infty} \leq (2\|f\|_{\infty}+1) \eps.
$$
The proof is complete.
\end{proof}

\begin{Definition}
Let $\eps\geq 0$. A closed set $S\subset\X$ is said to be an $\eps$-mild equilibrium (in continuous time), if
	\be 	\notag f(x)\leq J(x,S)+\eps,\quad \forall x\notin S.\ee
	Denote $\Ec_\eps$ the set of $\eps$-mild equilibria.
\end{Definition}
{\color{black} We only impose criterion on the part $x\notin S$ in the above definition, since it trivially holds that $f(x)+\eps\geq J(x,S)$ $\forall x\in S$, as explained in Remark \ref{rm:def.equilibrium}.} Define $V_\eps(x):=\sup_{S\in\Ec_\eps}J(x,S)$. The following lemma is from \cite[Proposition 3.5]{bwz-2022}.
\begin{Lemma}\label{lm:eps} 
{\color{black}Let Assumption \ref{a2} hold.} Then
$$
\lim_{\eps\to 0+}V_\eps(x) = V(x),\quad \forall x\in \X.
$$
\end{Lemma}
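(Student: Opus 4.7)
The easy direction is $V(x)\leq V_\eps(x)$ for every $\eps>0$, which is immediate from $\Ec\subseteq \Ec_\eps$, and by monotonicity ($\Ec_{\eps_1}\subseteq\Ec_{\eps_2}$ when $\eps_1<\eps_2$) the pointwise limit $V_0(x):=\lim_{\eps\to 0+}V_\eps(x)$ exists and satisfies $V_0\geq V$. The task is therefore to prove $V_0(x)\leq V(x)$ for each $x\in \X$.

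Fix $x_0\in \X$ and $\eps_n\downarrow 0$, and pick $S_n\in\Ec_{\eps_n}$ with $J(x_0,S_n)\geq V_{\eps_n}(x_0)-\eps_n$. The plan is to extract a subsequential limit $S_\infty$ of $\{S_n\}$ that is a genuine mild equilibrium with $J(x_0,S_\infty)\geq V_0(x_0)$. Since $\X=\R$ is locally compact, the family of closed subsets of $\R$ is sequentially compact in the Fell (Painlev\'e--Kuratowski) topology, so along a subsequence $S_n\to S_\infty$ for some closed $S_\infty\subset \R$.

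The main analytic step is to show $J(x,S_n)\to J(x,S_\infty)$ for every $x\in \X$ along the subsequence. Via the representation \eqref{eq:J.integral}, together with the uniform-in-$S$ bound $\|J(\cdot,S;r)\|_{\Cc^2(\X\setminus S)}\leq C_0(1+r)$ from Lemma \ref{lm:J.r}(a) and the integrability $\int_0^\infty r\,dF(r)<\infty$, it suffices to prove $J(x,S_n;r)\to J(x,S_\infty;r)$ for each fixed $r>0$ and apply dominated convergence in $r$. The pointwise convergence itself rests on the non-degeneracy $\inf\sigma>0$ in Assumption \ref{assum:1}(ii), which forces the hitting times $\rho_{S_n}$ to converge $\P_x$-a.s.\ to $\rho_{S_\infty}$ under Fell convergence (the starting point always ``penetrates'' the limiting boundary thanks to non-degeneracy), combined with bounded convergence applied to $\delta(\rho_{S_n})f(X_{\rho_{S_n}})$ and the uniform Lipschitz bound from Corollary \ref{lm:uniflip} to absorb boundary effects.

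Granted the continuity just described, I would verify $S_\infty\in\Ec$ as follows: for any $x\notin S_\infty$, openness of $\X\setminus S_\infty$ together with $S_n\to S_\infty$ in the Fell topology forces $x\notin S_n$ for all $n$ sufficiently large, so the $\eps_n$-equilibrium condition yields $f(x)\leq J(x,S_n)+\eps_n$; letting $n\to\infty$ gives $f(x)\leq J(x,S_\infty)$. Hence $S_\infty\in\Ec$, and
$$V(x_0)\geq J(x_0,S_\infty)=\lim_n J(x_0,S_n)\geq \lim_n\bigl(V_{\eps_n}(x_0)-\eps_n\bigr)=V_0(x_0),$$
which completes the proof. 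The principal obstacle is the continuity claim, specifically the behavior near $\partial S_\infty$ where hitting times may jump under Fell convergence; the one-dimensional non-degeneracy together with the H\"older estimates from Lemma \ref{lm:J.r} (which give uniform equicontinuity of $J(\cdot,S;r)$ across all closed $S$) are the decisive tools, and a case split according to whether $x$ lies in the interior of $S_\infty$, on $\partial S_\infty$, or in $\X\setminus S_\infty$ should close the argument cleanly.
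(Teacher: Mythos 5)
The paper does not actually prove this lemma; it invokes \cite[Proposition 3.5]{bwz-2022} as a black box, so there is no in-text argument to compare against. What you propose is a self-contained compactness argument: extract a Fell-convergent subsequence $S_n\to S_\infty$ of near-optimal $\eps_n$-mild equilibria, show $J(\cdot,S_n)\to J(\cdot,S_\infty)$, pass the $\eps_n$-equilibrium inequality to the limit to get $S_\infty\in\Ec$, and conclude. The architecture is sound: the easy direction, the Fell sequential compactness of closed subsets of $\R$, and the step ``$x\notin S_\infty\Rightarrow x\notin S_n$ eventually'' are all correct as stated, and the final chain of inequalities does close the argument.

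The entire weight rests on the continuity claim $J(x,S_n)\to J(x,S_\infty)$, and this is where the proposal is a promissory note rather than a proof. The fact that actually carries the day, and that you should isolate and prove, is purely probabilistic and uses one-dimensionality in an essential way: if $S_n\to S_\infty$ in Fell and $\inf\sigma>0$, then $\rho_{S_n}\to\rho_{S_\infty}$ $\P_x$-a.s.\ for every $x$. The direction $\liminf_n\rho_{S_n}\geq\rho_{S_\infty}$ follows from path continuity plus the Fell ``outer'' condition (limit points of $X_{\rho_{S_n}}$ must lie in $S_\infty$). The direction $\limsup_n\rho_{S_n}\leq\rho_{S_\infty}$ uses the Fell ``inner'' condition to produce $y_n\in S_n$ with $y_n\to X_{\rho_{S_\infty}}$, the strong Markov property and the Blumenthal $0$--$1$ law to ensure the path a.s.\ fluctuates to both sides of $X_{\rho_{S_\infty}}$ immediately after $\rho_{S_\infty}$, and then the intermediate value theorem (this is the genuinely one-dimensional ingredient) to conclude that the path hits $y_n$ within any prescribed time $\eps$. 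Once this is in hand, $\delta(\rho_{S_n})f(X_{\rho_{S_n}})\to\delta(\rho_{S_\infty})f(X_{\rho_{S_\infty}})$ a.s.\ (with the convention $\delta(\infty)f\equiv0$), and bounded convergence gives $J(x,S_n)\to J(x,S_\infty)$ directly. In particular, the detour through \eqref{eq:J.integral}, Lemma~\ref{lm:J.r}(a), and Corollary~\ref{lm:uniflip} is unnecessary here: the integrand is bounded by $\|f\|_\infty$, $F$ is a probability measure, and no growth-in-$r$ estimate or equicontinuity of $J(\cdot,S;r)$ is required by this route. The phrase about the starting point ``penetrating the limiting boundary'' also misplaces the emphasis --- what matters is the behaviour of the path at time $\rho_{S_\infty}$, not at time $0$. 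So: correct idea and valid route, but the hitting-time convergence is the real content and must be proved, not asserted, while the cited Lemma~\ref{lm:J.r} and Corollary~\ref{lm:uniflip} can be dropped.
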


\begin{proof}[\bf Proof of Theorem \ref{thm:semicts}]
Let $\eps>0$. By Proposition \ref{lm:main} there exists $h_0>0$ such that for any $h\leq h_0$
\be\notag
f(x(h))\leq \tilde J^h(x(h), S_*^h)\leq J(x(h), S_*^h)+\frac{\eps}{2}, \quad \forall x(h)\notin S_*^h.
\ee 
where the first inequality follows from $S_*^h$ being an equilibrium in the discretized model. Then by Corollary \ref{lm:uniflip} and \eqref{eq:S.xunf}, there exists $h_1\in(0,h_0)$ such that for any $h\leq h_1$ no matter $d(x,S_*^h)<3c_2\sqrt{h}$ or not ($c_2$ is from Assumption \ref{a3}),
$$
f(x)\leq J(x, S_*^h)+\eps, \quad \forall x\in \X, 
$$
Thus for $h>0$ small enough, $S_*^h$ is an $\eps$-mild equilibrium in continuous time and $V^h(\cdot)=\tilde J^h(\cdot,S_*^h)\leq V_\eps(\cdot)$. Consequently, 
\begin{equation}\label{eqq01}
\limsup_{h\to 0+}V^h(x)=\lim_{\eps\to 0+}\limsup_{h\to 0+}V^h(x)\leq \lim_{\eps\to 0+}(V_\eps(x)+\eps) = V(x),\quad \forall x\in \X,
\end{equation}
where the last inequality follows from Lemma \ref{lm:eps}. 
\end{proof}

\begin{proof}[\bf Proof of Theorem \ref{thm:epscts}] 
Let $\eps>0$. By Proposition \ref{lm:main} there exists $h_0>0$ such that for $h\leq h_0$,
\begin{equation}\label{e4}
\sup_{S \text{ closed}}\| J(x, S)-\tilde J^h(x,S)\|_{\infty}\leq \eps.
\end{equation}
Then for any $h\leq h_0$ and $y\notin S_*(h)$,
\begin{equation}\label{e5}
f(y)\leq J(y, S_*) \leq \tilde J^h(y, S_*)+\eps=\tilde J^h(y, S_*(h))+\eps.
\end{equation}
On the other hand, for any $h\leq h_0$ and $y\in S_*(h)$, by \eqref{e4},
$$f(y)=J(y,S_*(h))\geq \tilde J^h(y,S_*(h))-\eps.$$
This together with \eqref{e5} implies $S_*(h)$ is an $\eps$-equilibrium for any $h\leq h_0$. As a result, for $x\in\X$,
\be\label{eqq02}
V(x)=J(x,S_*)=\liminf_{h\to 0+}\tilde J^h(x,S_*)=\liminf_{\eps\to 0+} \liminf_{h\to 0+} \tilde J^h(x,S_*(h))\leq\liminf_{\eps\to 0+} \liminf_{h\to 0+} V^h_\eps(x).
\ee

Take $\eps>0$. Thanks to the Lipschitiz continuity $f$, there exists $h_1>0$ such that $|f(x)-f(y)|\leq \eps$ whenever $|x-y|\leq h_1$.
Then for any $h\leq h_0\wedge h_1$ and any $S\in \Ec_{\eps}^h$ we have that
$$
J(x,S)\geq \tilde J^h(x,S)-\eps=\tilde J^h(x(h),S)-\eps\geq f(x(h))-2\eps\geq f(x)-3\eps,\quad \forall x\notin S.
$$
That is, $S\in \Ec_{3\eps}$. As a consequence, $\Ec^h_{\eps}\subset \Ec_{3\eps}$ for any $h\leq h_0\wedge h_1$. Therefore,
\begin{align}
\label{eqq03}&\limsup_{\eps\to 0+}\limsup_{h\to 0+} V^h_{\eps} (x)=\limsup_{\eps\to 0+}\limsup_{h\to 0+}\sup_{S\in\mathcal{E}_\eps^h}\tilde J^h(x,S)\leq \limsup_{\eps\to 0+}\limsup_{h\to 0+}\sup_{S\in\mathcal{E}_\eps^h} (J(x,S)+\eps)\\
\notag &\leq \limsup_{\eps\to 0+}\limsup_{h\to 0+}\sup_{S\in\mathcal{E}_{3\eps}} J(x,S)= \limsup_{\eps\to 0+}V_{3\eps} (x) =V(x),\quad \forall x\in \X,
\end{align}
where the last equality follows from Lemma \ref{lm:eps}. This completes the proof.
\end{proof}

\begin{Remark}
Using almost the same proof, we can show that Theorems \ref{thm:semicts} and \ref{thm:epscts} still hold if $\tilde J^h$ is replaced by $J^h$ in the definition of $V^h$ and $V^h_\eps$ in \eqref{e6}. Indeed, in this case, it is easy to see that \eqref{eqq01} and \eqref{eqq03} still hold as $\tilde J^h(\cdot,S)\geq J^h(\cdot,S)-\eps$ for $S\in\mathcal{E}_\eps^h$. The last inequality in \eqref{eqq02} also holds, since if $x\in S_*$ then $J(x,S_*)=f(x)\leq V_\eps^h(x)+\eps$, and if $x\notin S_*$ then for $h>0$ small enough, $x\notin S_*(h)$ and thus $\tilde J^h(x,S_*(h))=J^h(x,S_*(h))$.
\end{Remark}

{\color{black}
	\begin{Remark}
		As mentioned in Remark \ref{rm:def.equilibrium}, any Borel stopping region $S$ and its closure $\overline S$ share the same value function. That is to say, we can release $S$ to be only Borel instead of being closed in Definition \ref{def.mild.optimal}, and let $\Ec$ denote the set of all Borel mild equilibria. Then Lemma \ref{lm.optimalmild} reads $S_*:= \cap_{S\in \Ec} \overline S$ as the smallest closed mild equilibrium and 
		$$
		V(x)=\sup_{S\in \Ec} J(x, \overline S)= \sup_{S\in \Ec} J(x, S)=\sup_{S\in \Ec \text{ is closed}} J(x, \overline S)=J(x, S_*).
		$$
		Then Theorems \ref{thm:semicts} and \ref{thm:epscts} are still valid.
	\end{Remark}
}

\section{An example of strict upper semi-continuity}\label{sec:4}

{\color{black}In this section, we provide an example showing that it is possible to have the strict inequality for \eqref{eq001}, i.e., $\limsup_{h\to0+} V^h(x)<V(x)$ for some $x\in \X$.} Proposition \ref{p11} is the main result of this section.

Let $X$ be a standard Brownian motion and $\Delta t = h=\frac{1}{3^{2n}}$ for $n\in \N_+$. Then by \eqref{eq:discrete.x} and \eqref{eq:discrete.P}, $x^h_k=\frac{k}{3^{n}}$ and $p^h_{k,+}= p^h_{k,-}=\frac{1}{2}$ for $k\in \Z$. Denote $x^*:=\frac{1}{2}$. Assume $F(\cdot)$ in \eqref{e3} is the uniform distribution on $[1,2]$. Let
\bee 
\begin{aligned}
	f_0(x):=& \E_x\left[ \delta\left( \rho_{\{1/2\}} \right) \right]=\int_1^2 e^{-\sqrt{2r} |x-1/2|}dr,\quad \text{for } x\in \R;\\
	f_1(x):=& \E_x\left[\delta \left( \rho_{\left\{ 0,1/2 \right\} } \right) \cdot 1_{\left\{ \rho_{\left\{0,1/2 \right\}}=\frac{1}{2} \right\}}\right]+f_0(0)\E^x\left[\delta(\rho_{\{0,1/2\}})\cdot 1_{\{\rho_{\{0,1/2\}}=0\}}\right]\\
	=&\int_1^2 \frac{e^{\sqrt{2r} x}- e^{-\sqrt{2r} x}}{e^{\sqrt{2r} x^*}-e^{-\sqrt{2r} x*}}+ f_0(0) \frac{e^{\sqrt{2r} (x^*-x)}- e^{-\sqrt{2r} (x^*-x)}}{e^{\sqrt{2r} x^*}-e^{-\sqrt{2r} x*}}dr, \quad \text{for } x\in [0,x^*].
\end{aligned}
\eee  
Then define the reward function as
\bee 
f(x):=
\begin{cases}
	e^{x}f_0(0)  \E_x\left[ \delta \left( \rho_{\{0\}} \right) \right]& x\in (-\infty, 0],\\
	\frac{1}{1+K_1x\left( \frac{1}{2}-x \right)}f_1(x) & x\in (0,x^*],\\
	e^{K_2(x^*-x)}f_0(x), &  x\in (x^*,\infty),
\end{cases}
\eee
where the constants $K_1,K_2>0$ are chosen such that
\begin{align}
\label{eqq04}  	f'(0+)=& f'_1(0+)-\frac{K_1}{2}f_0(0)\leq -17,\quad f'(x^*-)=f'_1(x^*-)+\frac{K_1}{2}\geq 17,\\
\label{eqq05}	f'(x^*+)=&f_0'(x^*+)-K_2<-f'(x^*-).
\end{align}
{\color{black}Then, $f$ is bounded and Lipschitz continuous over $\X$.}

\begin{Lemma}\label{lm:eg}
	Let $S:=\{0,x^*\}$. With a bit of abuse of notation, denote $J^h(k;r):=J^h(x, S;r)$ for $r\in[1,2]$ and $J^h(k):=\int_1^2J^h(k;r)dr$, for $x=k\sqrt{h}$ with $k\in \Z$. Then for $n$ big enough we have that
	\begin{align}
		J^h(k)\geq f(k\sqrt{h}),\quad\forall\, k\leq \xhstarv \label{eq:eg.lm}.
	\end{align}
	
\end{Lemma}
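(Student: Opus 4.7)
The plan is to exploit the explicit structure of this example—the simple symmetric random walk (since $p^h_{k,\pm}=1/2$) and the piecewise-explicit form of $f$—to compute $J^h(k;r)$ in closed form and compare with $f(k\sqrt{h})$ in each of several regions of $k$. Setting $\lambda:=1+rh$, $\xi_{\pm}:=\lambda\pm\sqrt{\lambda^2-1}$ (so $\xi_+\xi_-=1$), $\alpha:=\cosh^{-1}\lambda$ (so $\xi_{\pm}=e^{\pm\alpha}$) and $M:=\lfloor x^*/\sqrt{h}\rfloor=(3^n-1)/2$, the linear recurrence $2\lambda u^h(k;r)=u^h(k-1;r)+u^h(k+1;r)$ on $\{1,\ldots,M-1\}$ with boundary values $u^h(0;r)=f_0(0)$ and $u^h(M;r)=g:=f(M\sqrt{h})$ yields
\[u^h(k;r)=f_0(0)\,\frac{\sinh(\alpha(M-k))}{\sinh(\alpha M)}+g\,\frac{\sinh(\alpha k)}{\sinh(\alpha M)},\quad k\in[0,M].\]
Via the first-passage generating function of the walk, $u^h(k;r)=f_0(0)\,\xi_-^{|k|}$ for $k\le 0$ and $u^h(k;r)=g\,\xi_-^{k-M}$ for $k\ge M$. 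For $k\notin\{0,M\}$ one has $J^h(k;r)=u^h(k;r)$, while at $k\in\{0,M\}$ the constraint $\rho_S(h)\ge 1$ forces $J^h(k;r)=\tfrac{1}{2\lambda}\bigl(u^h(k-1;r)+u^h(k+1;r)\bigr)$.

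I would then treat the four regions separately and integrate over $r\in[1,2]$ at the end. For $k\le -1$: Taylor expansion gives $\log\xi_-=-\sqrt{2rh}+O(h^{3/2})$, so $\xi_-^{|k|}\ge e^{-\sqrt{2r}|k|\sqrt{h}}(1-O(h))$ uniformly since $|k|\sqrt{h}\le x^*$; combined with $e^{k\sqrt{h}}\le 1-|k|\sqrt{h}+O((k\sqrt{h})^2)<1$ in $f(k\sqrt{h})=e^{k\sqrt{h}}f_0(0)\int_1^2 e^{-\sqrt{2r}|k|\sqrt{h}}\,dr$, this supplies the required gap uniformly in $k$. For $1\le k\le M-1$ with $k\sqrt{h}\in[\eta,x^*-\eta]$ for a fixed small $\eta>0$: Proposition \ref{lm:main} gives $|J^h(k)-f_1(k\sqrt{h})|=O(\sqrt{h})$, while the strict continuous-time gap $f_1(x)-f(x)=f_1(x)K_1x(1/2-x)/(1+K_1x(1/2-x))\ge c_\eta>0$ on that interval swamps the discretization error. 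In the boundary layers $k\sqrt{h}\in(0,\eta)$ and $k\sqrt{h}\in(x^*-\eta,x^*)$, a Taylor expansion of the closed-form $u^h$ around $k=0$ (resp.\ $k=M$) gives $J^h(k)-f(k\sqrt{h})=k\sqrt{h}\,(f_1'(0^+)-f'(0^+))+O((k\sqrt{h})^2+\sqrt{h})$ near $0$, and \eqref{eqq04}--\eqref{eqq05} force $f_1'(0^+)-f'(0^+)=K_1f_0(0)/2\ge 17$ (and symmetrically near $x^*$), making the linear term dominate the higher-order remainder once $\eta$ is chosen small enough, independently of $h$.

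The two exceptional cases $k\in\{0,M\}$ are handled via the hyperbolic identities
\[J^h(0;r)-f_0(0)=\frac{\tanh\alpha}{2\sinh(\alpha M)}\bigl(g-f_0(0)\,e^{\alpha M}\bigr),\quad J^h(M;r)-g=\frac{\tanh\alpha}{2\sinh(\alpha M)}\bigl(f_0(0)-g\,e^{\alpha M}\bigr),\]
obtained by simplifying $\tfrac{1}{2\lambda}(u^h(\cdot-1;r)+u^h(\cdot+1;r))-u^h(\cdot;r)$ using $\cosh\alpha-e^\alpha=-\sinh\alpha$ and $\sinh(\alpha M)+\cosh(\alpha M)=e^{\alpha M}$. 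Expanding in $\sqrt{h}$ via $\alpha=\sqrt{2rh}+O(h^{3/2})$, $\alpha M=\sqrt{r/2}(1-\sqrt{h})+O(h)$, and $g=1-\tfrac12 f'(x^{*-})\sqrt{h}+O(h)$, integrating against $dr$ on $[1,2]$ and using the large-slope conditions \eqref{eqq04}--\eqref{eqq05}—which force $K_1$ (hence $f'(x^{*-})$) sufficiently large—one verifies the non-negativity of the integrated $\sqrt{h}$-coefficient. The main obstacle is precisely this last step: in contrast with all other regions, the continuous-time inequality $J(\cdot,S)\ge f$ degenerates to equality at the stopping points $0$ and $x^*$, so the discrete inequality at $k\in\{0,M\}$ cannot be extracted from any pure convergence argument and instead relies on a delicate cancellation between the two summands in the hyperbolic identity, controlled by the tailor-made large-slope conditions on $K_1,K_2$ together with integration against the uniform weight $dr$ on $[1,2]$.
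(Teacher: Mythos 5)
The overall architecture of your proposal—solving the linear recurrence explicitly, splitting into $k<0$, interior $0<k<M$ with an $\eta$-boundary layer vs.\ bulk, and the endpoints—follows the same path as the paper's proof, and most of it is sound (the bulk region using Proposition \ref{lm:main} plus the strict gap $f_1-f$, and the boundary layer using the slope conditions in \eqref{eqq04}, are essentially the paper's argument rephrased). However, there are two issues, one of which is fatal to a nontrivial piece of your argument.

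First, a minor point on $k\le -1$: you justify uniformity by invoking ``$|k|\sqrt{h}\le x^*$'', but for $k<0$ there is no upper bound on $|k|$. The conclusion is nonetheless correct, but for a cleaner reason that you do not state: with $\alpha=\cosh^{-1}(1+rh)$ one has the exact inequality $\alpha<\sqrt{2rh}$ (since $\cosh(\sqrt{2rh})>1+rh$), hence $\xi_-^{|k|}=e^{-\alpha|k|}>e^{-\sqrt{2r}|k|\sqrt{h}}$ for \emph{every} $k<0$ with no error term; combined with the safety factor $e^{k\sqrt{h}}<1$ in $f(k\sqrt{h})$, this gives the inequality uniformly. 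The paper's proof organizes this via $a_h^{1/\sqrt{2rh}}\le e$ and the sign of the exponent, which is the same observation.

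Second, and more seriously, the endpoint case $k\in\{0,M\}$ is wrong. You correctly notice that the lemma's $J^h$ (with $\rho_S(h)\ge 1$) differs from the boundary value $f(0)$, $f(x^*(h))$, and you correctly derive the hyperbolic identity
\[
J^h(M;r)-g=\frac{\tanh\alpha}{2\sinh(\alpha M)}\bigl(f_0(0)-g\,e^{\alpha M}\bigr),
\]
but the claimed ``non-negativity of the integrated $\sqrt{h}$-coefficient'' is false. Indeed, $g=f(x^*-\tfrac12\sqrt{h})\to f(x^*)=1$, while $f_0(0)=\int_1^2 e^{-\sqrt{2s}/2}\,ds<1$ and $e^{\alpha M}\to e^{\sqrt{2r}x^*}>1$, so $f_0(0)-g\,e^{\alpha M}$ has an $O(1)$ negative limit \emph{for every} $r\in[1,2]$; the prefactor $\tanh\alpha/(2\sinh(\alpha M))$ is strictly positive, hence $J^h(M;r)<g$ for all $r\in[1,2]$ once $n$ is large, and integration over $r$ only preserves the strict negative sign. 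The large-slope conditions \eqref{eqq04}--\eqref{eqq05} cannot help, since $K_1,K_2$ only enter through $g$ at order $\sqrt{h}$ while the defect is order $1$. What actually happens in the paper is that the boundary values of the recurrence at $k=0$ and $k=M$ are taken to be $f(0)$ and $f(x^*(h))$ (equivalently, $\tilde J^h$ at those two points), so \eqref{eq:eg.lm} holds there trivially with equality; and the only use of the lemma in Proposition \ref{p11} is via the pseudo-equilibrium condition, which constrains only $k\notin S(h)$. So you have identified a genuine notational subtlety, but then set yourself a goal that is in fact false, and your attempt to achieve it by a ``delicate cancellation'' cannot succeed.
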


\begin{proof}
	We first prove \eqref{eq:eg.lm} for $k=0,..., \xhstarh$. Fix $r\in[1,2]$. We have that
	$$
	\begin{cases}
		J^h(k;r)=\frac{(1+rh)^{-1}}{2}(J^h(k-1;r)+J^h(k+1;r)),\quad k=1,...,\xhstarh-1, \\
		J^h(0;r)=f(0), \quad J^h\left( \xhstarv;r \right)=f\left( x^*-\frac{1}{2}\sqrt{h} \right).
	\end{cases}
	$$
	Using the characteristic equation, we get that
	\bee 
	J^h(k;r)=\frac{a_h^k-a_h^{-k}}{a_h^{\xhstarh}-a_h^{-\xhstarh}} f\left( x^*-\frac{1}{2}\sqrt{h} \right)+ \frac{a_h^{\xhstarh-k}-a_h^{-\left( \xhstarh-k \right)}}{a_h^{\xhstarh}-a_h^{-\xhstarh}} f(0),\quad k=0,..., \xhstarv,
	\eee 
	where 
	\be\label{eq:eg.ah}
	a_h=\frac{1+\sqrt{1-(1+rh)^{-2}}}{(1+rh)^{-1}}=1+ r h+\sqrt{r^2 h^2+2 r h}=\left(1+ r h-\sqrt{r^2 h^2+2 r h} \right)^{-1}.
	\ee 
	A direct calculation gives that
	\bee\label{eq:eq.ah'} 
	\lim_{n\to\infty} \frac{1-a_h}{\sqrt{h}}=-\sqrt{2r}, \quad  \lim_{n\to\infty} \frac{1-a_h^{-1}}{\sqrt{h}}=\sqrt{2r}.
	\eee
	Take $N_1\in\N$ such that $\left| \frac{1-a_h}{\sqrt{h}} \right|, \left| \frac{1-a_h^{-1}}{\sqrt{h}} \right| \leq \sqrt{2r}+\frac{1}{3}$ for all $n\geq N_1$. Set $y:=\sqrt{2rh}$ and 
	\be\label{eq:eg.lm5}  
	g(y):=a_h^{\frac{1}{\sqrt{2r h}}}=\left(1+ r h-\sqrt{r^2 h^2+2 r h} \right)^{-\frac{1}{\sqrt{2rh}} }=\left[1-\left(\sqrt{y^2+y^4/4}-y^2/2\right)\right]^{-\frac{1}{y}}.
	\ee
	We can compute that
	$$\ln(g(y))=1-\frac{1}{24}y^2+O(y^3),$$
	which implies 
	$ 
	g(y) \to e \text{ as $n\to\infty $},
	$
	and thus $a_h^{\xhstarh} = e^{(1-\frac{1}{24}y^2+O(y^3))\sqrt{2rh} \xhstarv} \to  e^{\sqrt{2r}x^*}\geq e^{x^*}$ for $r\in[1,2]$. Since $a_h^k+a_h^{-k}\leq a^{\xhstarh}+a^{-\xhstarh}$ for all $k=0,1,..., \xhstarh$ and $\frac{e^{x^*}+e^{-x^*}}{e^{x^*}-e^{-x^*}} \approx 2.163953414$, we can take $N_2\geq N_1$ such that for all $n\geq N_2$, we have
	\be\label{eq:eg.lm7} 
	\frac{a_h^{k}+a_h^{-k}}{a_h^{\xhstarh}-a_h^{-\xhstarh}}\leq 2.5,\quad \forall r\in[1,2],\  k=0,...,\xhstarh. 
	\ee 
	
	Now for $r\in[1,2]$ and $k=1,..., \xhstarh$, we have that 
	\begin{align*}
		& \frac{| J^h(k;r)- J^h(k-1; r)|}{\sqrt{h}} \\
		&=\left|  \frac{a_h^k(1-a_h^{-1})-a_h^{-k}(1-a_h)}{\sqrt{h} \left( a_h^{\xhstarh}-a_h^{-\xhstarh}\right)} f\left( x^*-\frac{1}{2}\sqrt{h} \right)
		+\frac{a_h^{\xhstarh-k}(1-a_h)-a_h^{-\left( \xhstarh-k \right)}(1-a_h^{-1})}{\sqrt{h} \left( a_h^{\xhstarh}-a_h^{-\xhstarh}\right)} f\left( 0\right)   \right|\\
		&\leq \left| \sqrt{2r}+\frac{1}{3} \right| \left|  \frac{a_h^k+a_h^{-k}}{ a_h^{\xhstarh}-a_h^{-\xhstarh}} f\left( x^*-\frac{1}{2}\sqrt{h} \right)
		+ \frac{a_h^{\xhstarh-k} + a_h^{-(\xhstarh-k)}}{ a_h^{\xhstarh}-a_h^{-\xhstarh}} f\left( 0\right)   \right|\\
		&\leq  5(\sqrt{2r}+\frac{1}{3})\quad \forall n\geq N_2,
	\end{align*}
	where the last line follows from \eqref{eq:eg.lm7}.
	Thus, for $k=1,...,\xhstarh$,
	\be\label{eq:eg.lm3}  
	\begin{aligned}
		\frac{| J^h(k)- J^h(k-1)|}{\sqrt{h}} \leq & \int_1^2 \frac{| J^h(k;r)- J^h(k-1; r)|}{\sqrt{h}} dr <15,\quad \forall n\geq N_2.
	\end{aligned}
	\ee 
	By \eqref{eqq04}, there exists $\alpha\in(0,1/4)$ such that 
	$$f'(t+)\leq -16, f'((x^*-t)-)\geq 16 \quad \forall t\in[0,\alpha].
	$$ 
	Combining the above with \eqref{eq:eg.lm3}, we have 
	\be\label{eq:eg.lm1}   
	J^h(k)\geq f(k\sqrt{h}), \text{ if } k\sqrt{h}\in [0,\alpha]\cup[x^*-\alpha, x^*].
	\ee 
	Notice that $J^h(x, S)\to f_1(x)$. By Proposition \ref{lm:main} and the definition of $f$, there exists $N_3\geq N_2$ such that $J^h(\cdot, S)\geq f(\cdot)$ on $[\alpha, x^*-\alpha]$ for all $n\geq N_3$. This together with \eqref{eq:eg.lm1} tells that \eqref{eq:eg.lm} holds for $k=0,1,...,\xhstarh$ whenever $n\geq N_3$.
	
	Next, we verify \eqref{eq:eg.lm} for $k<0$. Take $r\in[1,2]$. By \eqref{eq:eg.ah} and \eqref{eq:eg.lm5}, we have that for $k< 0$
	\bee 
	\begin{aligned}
		J^h(k;r)	= f_0(0) a_h^{k}=f_0(0)\left( a_h^{\frac{1}{\sqrt{2rh}}} \right)^{\sqrt{2r} k\sqrt{h}}= f_0(0)\left( e^{1-\frac{1}{24}y^2 +O(y^3)}\right)^{ \sqrt{2r} (k\sqrt{h})},
	\end{aligned}
	\eee 
	where the first equality above is implied by the characteristic equation. As $r\in[1,2]$, we can find $N>0$ independent of $r$ such that $e^{1-\frac{1}{24}y^2 +O(y^3)}\leq e$ for any $r\in[1,2]$ and $n\ge N$. Hence, $J^h(k;r)\geq f_0(0)e^{\sqrt{2r}k\sqrt{h}}$ and thus 
	$$J^h(k) = \int_1^2 J^h(k;r) dr \geq f(k\sqrt{h}),\quad \text{for } k<0,n\geq N.$$	
\end{proof}

\begin{Proposition}\label{p11}
	$S_*= \{ 1/2\}$ and $V(x)= f_0(x)$ for $x\in \R$. Meanwhile, $0\in S_*^h$ for $n\in \N_+$ large enough and $\limsup_{n\to\infty} V^h(x)< V(x)$ for all $x<0$.
\end{Proposition}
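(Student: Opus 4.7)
The proof naturally splits into a continuous and a discrete part. For the continuous claim, the strategy is to verify that $\{x^*\}\in\Ec$, after which Lemma \ref{lm.optimalmild} together with the observation $\emptyset\notin\Ec$ (since $f>0$) forces $S_*=\{x^*\}$ and $V=f_0$. The inequality $f(x)\le J(x,\{x^*\})=f_0(x)$ for $x\ne x^*$ is checked piece by piece matching the construction of $f$: on $(-\infty,0]$, splitting $\rho_{\{x^*\}}$ at $\rho_{\{0\}}$, the strong Markov property together with the log-submultiplicativity \eqref{eq.delta.logsub} yields $f_0(x)\ge f_0(0)\,\E_x[\delta(\rho_{\{0\}})]\ge e^x f_0(0)\,\E_x[\delta(\rho_{\{0\}})]=f(x)$; on $(0,x^*)$, the analogous splitting at $\rho_{\{0,x^*\}}$ gives $f_0(x)\ge f_1(x)\ge f(x)$, the last step using $1+K_1 x(\tfrac12-x)\ge 1$; on $(x^*,\infty)$, $f(x)=e^{K_2(x^*-x)}f_0(x)<f_0(x)$ is immediate.

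For the discrete claim, the target is to show $S_*^h=\{0,x^*(h)\}$ for all large $n$, which gives both $0\in S_*^h$ and a clean formula for $V^h$ on $(-\infty,0)$. This reduces to (i) $\{0,x^*(h)\}\in\Ec^h$ (hence $S_*^h\subseteq\{0,x^*(h)\}$ by minimality), and (ii) excluding $\emptyset,\{0\},\{x^*(h)\}$ from $\Ec^h$. The continuation side of (i) below $x^*(h)$ is exactly Lemma \ref{lm:eg}. Above $x^*(h)$, the closed form $\tilde J^h(k\sqrt h,\{0,x^*\};r)=a_h^{-(k-M)}f(x^*(h))$ with $M:=\lfloor x^*/\sqrt h\rfloor$, combined with the asymptotic expansions $a_h^{-1}=1-\sqrt{2rh}+O(h)$, $f(x^*(h))=1-\tfrac{\sqrt h}{2}f'(x^*-)+O(h)$ and $f((M+j)\sqrt h)=1-(j-\tfrac12)\sqrt h\,(K_2+\bar c)+O(h)$ with $\bar c:=\int_1^2\sqrt{2r}\,dr$, reduces the continuation inequality essentially to $K_2\ge f'(x^*-)+\bar c$, which is built into the large-$K_1,K_2$ choice via \eqref{eqq04}--\eqref{eqq05}. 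The stopping conditions at $0$ and $x^*(h)$ follow from the one-step recursion $J^h(x,S;r)=(1+rh)^{-1}\cdot\tfrac12[\tilde J^h(x-\sqrt h,S;r)+\tilde J^h(x+\sqrt h,S;r)]$ together with the characteristic-root formulas of Lemma \ref{lm:eg}, yielding at leading order $J^h(0,\{0,x^*\})=f(0)+\tfrac{\sqrt h}{2}(I_1-f(0)I_2)+O(h)$ where $I_1:=\int_1^2\frac{\sqrt{2r}}{\sinh(\sqrt{2r}x^*)}\,dr$ and $I_2:=\int_1^2\frac{\sqrt{2r}\,e^{\sqrt{2r}x^*}}{\sinh(\sqrt{2r}x^*)}\,dr$; the specific choice $x^*=1/2$ and $F=\mathrm{Unif}[1,2]$ makes $I_1\le f(0)\,I_2$, and an analogous tally settles $x^*(h)$.

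For (ii), $\emptyset$ is excluded by $f>0$. The set $\{0\}$ fails because $J^h(x^*(h),\{0\})\to f_0(0)^2<1=\lim f(x^*(h))$, violating the continuation condition at $x^*(h)$. For $\{x^*(h)\}$, computing $J^h(0,\{x^*(h)\};r)=a_h^{-M}f(x^*(h))$ and using $\{x^*/\sqrt h\}=1/2$, the expansion $a_h^{-M}=e^{-\sqrt{2r}x^*}(1+\sqrt{2rh}/2+O(h))$ from the proof of Lemma \ref{lm:eg} together with $f(x^*(h))\le 1-17\sqrt h/2$ from \eqref{eqq04} yield $J^h(0,\{x^*(h)\})\le f_0(0)+\tfrac{\sqrt h}{2}(\alpha-17 f_0(0))+O(h)$ with $\alpha:=\int_1^2\sqrt{2r}\,e^{-\sqrt{2r}x^*}\,dr\le 2 f_0(0)<17 f_0(0)$; hence $J^h(0,\{x^*(h)\})<f(0)$ for $n$ large, violating the continuation condition at $0\notin\{x^*(h)\}$.

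With $S_*^h=\{0,x^*(h)\}$ in hand, fix $x<0$. Discrete paths from $x(h)<0$ cannot reach $x^*(h)>0$ without first visiting $0$, so $\rho_{S_*^h}(h)=\rho_{\{0\}}(h)$ a.s., and $V^h(x)=\tilde J^h(x,S_*^h)=f(0)\int_1^2 a_h^{k(h)}\,dr$ by Fubini, where $k(h):=x(h)/\sqrt h$. The asymptotic $a_h^{k(h)}\to e^{-\sqrt{2r}|x|}$ (from the $g(y)$ computation in the proof of Lemma \ref{lm:eg}) together with dominated convergence give $V^h(x)\to f_0(0)\int_1^2 e^{-\sqrt{2r}|x|}\,dr$, while $V(x)=f_0(x)=\int_1^2 e^{-\sqrt{2r}x^*}\,e^{-\sqrt{2r}|x|}\,dr$. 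Since both $r\mapsto e^{-\sqrt{2r}x^*}$ and $r\mapsto e^{-\sqrt{2r}|x|}$ are strictly decreasing and nonconstant on $[1,2]$, the strict Chebyshev sum inequality gives $V(x)>\lim V^h(x)=\limsup V^h(x)$. The main obstacle will be the stopping-condition verification at $0$ and $x^*(h)$ in (i), which is tight at order $\sqrt h$ and relies on the delicate integral identity $I_1\le f(0)I_2$ for this specific $(F,x^*)$; the equally tight continuation check above $x^*(h)$ hinges on the careful pairing of \eqref{eqq04} and \eqref{eqq05} through the large-$K_1,K_2$ choice.
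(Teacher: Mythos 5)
Your continuous part and final step mirror the paper (the Chebyshev argument at the end is the integral form of the decreasing-impatience inequality \eqref{eq.delta.logsub} that the paper invokes), but your discrete part takes a genuinely different and, unfortunately, flawed route. The paper shows only that $S_*^h\cap(-\infty,x^*]=\{0,x^*(h)\}$, which suffices for the conclusion; it does so by taking the enlarged stopping set $S=\{0\}\cup[x^*,\infty)$, verifying (via Lemma \ref{lm:eg}) that $S(h)$ is a \emph{pseudo equilibrium} (which requires only the continuation inequality on the complement), and then intersecting with arbitrary pseudo equilibria using the machinery of \cite{bwz-2023}. Because every point of the grid above $x^*(h)$ lies inside $S(h)$, no continuation condition above $x^*$ ever needs to be checked, and because pseudo equilibria impose no stopping-side condition, the delicate order-$\sqrt h$ balance at $0$ and $x^*(h)$ that you flag as ``the main obstacle'' is never confronted.

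Your strategy instead tries to show that $\{0,x^*(h)\}$ is an \emph{exact} equilibrium, and this is where the gap lies. Your own first-order expansion of the continuation condition at $(M+1)\sqrt h$ gives the requirement $K_2\ge f'(x^*-)+\bar c$ with $\bar c=\int_1^2\sqrt{2r}\,dr$. But \eqref{eqq05} only imposes $f'(x^*+)=-\bar c-K_2<-f'(x^*-)$, i.e.\ $K_2>f'(x^*-)-\bar c$, which is weaker by $2\bar c\approx 3.4$. For any valid $K_2\in(f'(x^*-)-\bar c,\,f'(x^*-)+\bar c)$, the continuation inequality fails at $x^*(h)+\sqrt h$ for small $h$, so $\{0,x^*(h)\}\notin\Ec^h$ and your step (i) collapses; $S_*^h$ then necessarily contains points above $x^*$ and your clean identity $S_*^h=\{0,x^*(h)\}$ is false. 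Your claim that the needed bound is ``built into the large-$K_1,K_2$ choice'' is not correct: the paper's construction does not require it. A related secondary concern is the stopping-side condition at $0$ and $x^*(h)$: it is genuinely tight at order $\sqrt h$ and is not something the paper ever has to settle, since it works with pseudo equilibria; your reduction to $I_1\le f(0)I_2$ is plausible but unverified in your sketch, and the analogous check at $x^*(h)$ is only gestured at. To repair your approach, either enlarge your candidate set to $\{0\}\cup(\X^h\cap[x^*,\infty))$ and adopt the pseudo-equilibrium framework as the paper does, or impose and carry through the extra hypothesis $K_2\ge f'(x^*-)+\bar c$ as part of the example construction (which would prove a version of the proposition, but not the paper's).
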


\begin{proof}
	By the definition of $f_0$ and $f$, we have that
	$$f(x)\leq J(x, \{1/2\}),\quad \forall x\in \R,$$
	and $f$ achieves the global maximum at $1/2$. Thus, $S_*=\{1/2\}$ and $V(x)=J(x, \{1/2\})=f_0(x)$. 
	
	Now we prove that for all $n$ big enough,
	\be\label{eq:eg.0}  
	J^h(0, \{1/2\})<J(0, \{1/2\})=f(0).
	\ee 
	Take $r>0$. We have that
	\bee 
	J^h(0, \{1/2\}; r)=f\left(x^*-\frac{1}{2}\sqrt{h}\right) \E_0 \left[ (1+rh)^{\rho_{\{1/2\}}(h)} \right],
	\eee 
	where 
	\be\label{eq:eg.1} 
	f(x^*(h))=f\left(x^*-\frac{1}{2}\sqrt{h}\right)=f(1/2)-\frac{f'_{-}(1/2)}{2} \sqrt{h} +O(h).
	\ee 
	By using characteristic equation and combining with \eqref{eq:eg.ah} and \eqref{eq:eg.lm5} again, we have that
	\bee 
	\begin{aligned}
		\E^h_0\left[ (1+rh)^{-\rho_{\{1/2\}}(h)} \right]=& \left( \frac{1-\sqrt{1-(1+rh)^{-2} }}{(1+ r h)^{-1}} \right)^{\left\lfloor\frac{x^*}{\sqrt{h}} \right\rfloor}=e^{\left(1-\frac{1}{24}y^2+O(y^3) \right)\left(-\sqrt{2r}x^* A_h \right)},
	\end{aligned}
	\eee 
	where $A_h:=\left\lfloor\frac{x^*}{\sqrt{h}} \right\rfloor \big/\frac{x^*}{\sqrt{h}}$.
	By \eqref{eq:eg.1}, $J(0, \{1/2\}; r)= e^{-\sqrt{2r}x^* }f(1/2)$ and the above equality,
	\be\label{eq:eg.3}   
	\begin{aligned}
		&J^h(0, \{1/2\}; r)-J(0, \{1/2\}; r)\\
		&= \left(e^{\left(1-\frac{1}{24}y^2+O(y^3) \right)\left(-\sqrt{2r}x^* A_h \right)}-e^{\left(1-\frac{1}{24}y^2+O(y^3) \right)\left(-\sqrt{2r}x^* \right)}\right)f(1/2)\quad \text{(I)}\\
		&\quad + \left(e^{\left(1-\frac{1}{24}y^2+O(y^3) \right)\left(-\sqrt{2r}x^* \right)}-e^{-\sqrt{2r}x^* }\right)f(1/2) \quad \text{(II)}\\
		&\quad -\frac{f'_{-}(1/2)}{2} \sqrt{h} \cdot e^{\left(1-\frac{1}{24}y^2+O(y^3) \right)\left(-\sqrt{2r}x^* A_h \right)} \quad \text{(III)}+O(h).
	\end{aligned}
	\ee 
	Denote $B:= (1-\frac{1}{24}y^2+O(y^3))\sqrt{2r}x^*$. Notice that $0\leq 1-A_h=\frac{\sqrt{h}}{2x^*}= \sqrt{h}$. We have that 
	\begin{align*}
		\text{(I)}&= e^{-BA_h}(1-e^{-B\sqrt{h}})f(1/2)=e^{-BA_h}\left( \sqrt{2r}x^*\sqrt{h}+O(h) \right)f(1/2),\\
		\text{(II)}&=\left(e^{-\sqrt{2r}x^*+O(h) }-e^{-\sqrt{2r}x^* }\right)f(1/2)=O(h)f(1/2),\\
		\text{(III)}&=-\frac{f'(x^*-)}{2} e^{-BA_h} \sqrt{h}.
	\end{align*}
	By \eqref{eqq04}, we have $\frac{f'(x^*-)}{2}>1\geq\sqrt{2r}x^*$ for any $r\in[1,2]$. This together with \eqref{eq:eg.3} implies that for $n$ big enough,
	$$J^h(0, \{1/2\}; r)-J(0, \{1/2\}; r)<0\quad  \forall r\in[1,2].$$
	As a consequence, \eqref{eq:eg.0} holds.
	
	Since $f(x^*)$ is the maximum value of $f$ and $f'(x^*+)<-f'(x^*-)$ by \eqref{eqq05}, we have $f(x^*-\frac{1}{2}\sqrt{h})>f(x^*+\frac{1}{2}\sqrt{h})$ on $\{k\sqrt{h}:\ k\in \Z\}$ for $n$ large enough. Hence, $x^*(h)\in S^h_*$ for large $n$. By Lemma \ref{lm:eg}, with $S=\{0\}\cup[x^*,\infty)$, $S(h)$  is a pseudo equilibrium (see \cite[Definition 4.2]{bwz-2023}) for the discretized model when $n$ is large enough. Suppose $R\subset\{k\sqrt{h}:\ k\in \Z\}$ is another pseudo equilibrium. Then by \cite[Lemma 4.2(a)]{bwz-2023}, $R\cap S(h)$ is also a pseudo equilibrium. If $0\notin R$, then $R\cap S(h)\cap(-\infty,x^*]=\{x^*(h)\}$. Then \eqref{eq:eg.0} would contradict $R\cap S(h)$ being a pseudo equilibrium. Therefore, $0\in R$. By \cite[Proposition 4.2]{bwz-2023} $S_*^h$ coincides with the smallest pseudo equilibrium. As a result, for $n$ large enough, $S_*^h\cap(-\infty,x^*]=\{0,x^*(h)\}$.
	Then for any $x_0<0$, we have that 
	$$
	\limsup_{n\to\infty}V^h(x_0) =\limsup_{n\to\infty}J^h(x_0, \{0\}) =J(x_0, \{0\})<J(x_0, \{1/2\}),
	$$
	where the inequality follows from the decreasing impatience property \eqref{eq.delta.logsub}. The proof is complete.
\end{proof}

\section{An example on American put option}\label{sec:5}
We consider the stopping problem for an American put option. We construct the optimal equilibria for the continuous-time and binomial-tree model, and show the convergence of value functions. Proposition \ref{proex} is the main result of this section.

Let $\delta(t)=\int_1^2 e^{-rt}dF(r)$ for some distribution function $F$ supported on $[1,2]$, and therefore the assumption $\int_0^\infty rdF(r)<\infty$ is fulfilled. Let $Z$ be a one-dimensional geometric Brownian motion,
$$
dZ_t = \mu Z_t dt + \sigma Z_t dW_t,\quad Z_0=z\in (0,\infty).
$$
The payoff function is given by $g(z):=(K-z)^+$. This problem is on exercising a perpetual American put option with hyperbolic discounting. It can also be viewed as a real option problem, where a company evaluates an investment plan with a constant payoff $K$ and a stochastic cost $Z$, seeking to determine an optimal time for execution. Lemma \ref{lm:egnew.0} presents the optimal equilibrium strategy for making the investment, and Proposition \ref{proex} demonstrates that the company can use the binomial tree model to estimate this equilibrium strategy.


For simplicity, we set $\sigma=1$ and assume $\nu := \mu-1/2\geq -1/2$. Then $Z$ is a submartingale. (At this stage assumptions in our theory are not satisfied for $Z$. We will change $Z$ to $X :=\ln Z$ later so that these assumptions will be satisfied.) We use the notation $\tilde \E$ for the expectation under $Z$. For $z\in(0,\infty)$ and $\tilde a \in (0, K)$, define 
$$\eta(z; \tilde a):=\tilde\E_z\left[ \delta\left( \rho_{\{\tilde a\} }  \right) g\left( Z_{\rho_{\{ \tilde a \}} } \right)   \right].\footnote{Here with a bit of abuse of notation, $\rho$ is understood as a stopping time for $Z$ and $\rho_{\{a\}}$ represents the first time for $Z$ entering the set $\{\tilde a\}$. For the rest of this section, $\rho_S$ is understood as the first time for the underlying process (either discretised or continuous) to enter $S$. }$$

\begin{Lemma}\label{lm:egnew.0}
For any $0<\tilde a<K$, the map $z\mapsto\eta(z; \tilde a)$ on $[\tilde a,\infty)$ is strictly decreasing and strictly convex, and the map $\tilde a\mapsto\eta'(\tilde a+; \tilde a)$ is increasing $(0, K)$. Moreover, $(0,\tilde a]$ is a mild equilibrium if and only if $\tilde a\geq  \frac{\lambda}{1+\lambda}K$ with  
$$
\lambda:= \int_1^2 \left( \sqrt{ \nu^2 +2r }+\nu \right) dr>0.
$$

\end{Lemma}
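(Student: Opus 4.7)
The plan is to obtain an explicit formula for $\eta(\cdot;\tilde a)$ and then read off every assertion from it. Passing to $X:=\ln Z$ turns the process into a Brownian motion with drift $\nu=\mu-1/2$, and the first hitting of $\tilde a$ by $Z$ coincides with the first hitting of $\ln\tilde a$ by $X$. Since starting from $z\geq \tilde a$ forces $Z_{\rho_{\{\tilde a\}}}=\tilde a$ on $\{\rho_{\{\tilde a\}}<\infty\}$ and $g(\tilde a)=K-\tilde a$, Fubini combined with optional stopping applied to the exponential martingale $e^{\alpha X_t-(\alpha\nu+\alpha^2/2)t}$ with the negative root $\alpha_-=-(\nu+\sqrt{\nu^2+2r})$ of $\alpha^2+2\nu\alpha-2r=0$ yields the standard identity
$$
\eta(z;\tilde a)=(K-\tilde a)\int_1^2\left(\frac{\tilde a}{z}\right)^{\alpha_r}dF(r),\qquad \alpha_r:=\sqrt{\nu^2+2r}+\nu>0,\quad z\geq \tilde a.
$$
Boundedness of $e^{\alpha_- X_t}$ on $\{X_t\geq \ln\tilde a\}$ makes the limit $T\to\infty$ in the optional-stopping identity routine even when $\nu>0$ and $\rho_{\{\tilde a\}}$ is infinite with positive probability.

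The first two assertions are then immediate. Each integrand $z\mapsto z^{-\alpha_r}$ with $\alpha_r>0$ is strictly decreasing and strictly convex on $(0,\infty)$, and positive integration preserves both strict properties, so $\eta(\cdot;\tilde a)$ is strictly decreasing and strictly convex on $[\tilde a,\infty)$. Differentiating under the integral and evaluating at $z=\tilde a+$ gives
$$
\eta'(\tilde a+;\tilde a)=-\frac{K-\tilde a}{\tilde a}\int_1^2 \alpha_r\,dF(r)=-\lambda\left(\frac{K}{\tilde a}-1\right),
$$
matching the lemma's definition of $\lambda$ (reading the $dr$ there as $dF(r)$; the two agree for a uniform $F$ on $[1,2]$). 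The right-hand side has $\tilde a$-derivative $\lambda K/\tilde a^2>0$, so $\tilde a\mapsto \eta'(\tilde a+;\tilde a)$ is strictly increasing on $(0,K)$.

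For the equilibrium characterization, condition \eqref{eq:def.mild0} is trivial on $(0,\tilde a]$ since $\rho_{(0,\tilde a]}=0$ there, and \eqref{eq:def.mild1} reduces to $g(z)\leq \eta(z;\tilde a)$ for $z>\tilde a$, because a continuous path started above $\tilde a$ first enters $(0,\tilde a]$ at exactly $\tilde a$. Set $\phi(z):=\eta(z;\tilde a)-g(z)$. Then $\phi(\tilde a)=0$, $\phi$ is strictly convex on $[\tilde a,K]$ (strict convexity of $\eta$ plus affineness of $-g$ there), and $\phi>0$ on $(K,\infty)$ since $g$ vanishes there while $\eta$ remains positive. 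A strictly convex function that vanishes at $\tilde a$ is non-negative on $[\tilde a,K]$ if and only if its right-derivative satisfies $\phi'(\tilde a+)\geq 0$ (subgradient inequality in one direction, continuity of $\phi'$ past $\tilde a$ in the other), which unravels to $\eta'(\tilde a+;\tilde a)\geq -1$, i.e., $\lambda(K-\tilde a)\leq \tilde a$, i.e., $\tilde a\geq \lambda K/(1+\lambda)$.

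The only step that demands genuine care is the Laplace-transform identity in the first paragraph; the choice of the negative root $\alpha_-$ keeps the exponential martingale bounded on $\{X_t\geq \ln\tilde a\}$, so bounded convergence handles the passage to $T\to\infty$ uniformly in the sign of $\nu$. No other serious obstacle is anticipated.
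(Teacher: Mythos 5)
Your proof is correct and follows essentially the same route as the paper: obtain the explicit formula for $\eta$, read off strict monotonicity and strict convexity from the integrand $z\mapsto(\tilde a/z)^{\alpha_r}$, and reduce the equilibrium condition to $\eta'(\tilde a+;\tilde a)\geq -1$, which unravels to $\tilde a\geq \lambda K/(1+\lambda)$. The paper quotes the Laplace transform identity from Borodin--Salminen and leaves the convexity-based ``if and only if'' implicit, whereas you derive the identity by optional stopping on the exponential martingale and spell out the convexity step via $\phi=\eta-g$; these are correct expansions of the same argument (and you rightly note the $dr$ versus $dF(r)$ discrepancy in the displayed definition of $\lambda$, which the paper itself resolves as $dF(r)$ in the proof of Lemma 5.3).
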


\begin{proof}
Take $0<\tilde a<K$. By \cite[2.0.1 on page 628]{MR1912205}
$$
\eta(z;\tilde a) = (K-\tilde a )\int_1^2 \tilde \E_z[e^{-r \rho_{\{\tilde a \}}}] dr  = (K-\tilde a)\int_1^2 \left(\frac{\tilde a}{z}\right)^{\sqrt{\nu^2+2r}+\nu} dr,\quad z\geq \tilde a.
$$
Then a direct calculation shows that 
\begin{align*}
\eta'(z+;\tilde a) = & -\frac{K-\tilde a }{ z }\int_1^2 (\sqrt{\nu^2+2r}+\nu) (\frac{\tilde a}{z})^{\sqrt{\nu^2+2r}+\nu} dr <0,\quad  z\geq \tilde a;\\
\eta''(z+;\tilde a) = & \frac{K-\tilde a }{ z^2 }\int_1^2 (\sqrt{\nu^2+2r}+\nu) (\sqrt{\nu^2+2r}+\nu+1)(\frac{\tilde a}{z})^{\sqrt{\nu^2+2r}+\nu} dr >0,\quad  z\geq \tilde a.
\end{align*}
Moreover, $\eta'(\tilde a+; \tilde a) = -\frac{K-\tilde a }{\tilde a}\int_1^2 (\sqrt{\nu^2+2r}+\nu) dr$ increases on $(0,K)$. We can then conclude that $(0,\tilde a]$ is a mild equilibrium if and only if $\eta'(\tilde a+;\tilde a)=-\frac{K-\tilde a}{\tilde a}\lambda\geq -1$.
\end{proof}


To ensure the discretization can be constructed aligning with Assumption \ref{a3}, we set $X_t = \ln(Z_t)$ which satisfies
$$dX_t = \nu dt +dW_t$$
with $X_0=x\in \R$, and the reward function becomes $f(x):=(K-e^x)^+$. Denote $\E$ the expectation under $X$. We have
$$
J(x;a):=\E_x\left[ \delta\left( \rho_{(-\infty, a]} \right) f\left( X_{\rho_{(-\infty, a]} } \right) \right]= \eta(e^x; e^a), \quad  x\in \R, \ a\in (-\infty, \ln K),
$$
Then the following is immediate from Lemma \ref{lm:egnew.0}.
\begin{Corollary}\label{cor:egnew}
 $(a,\infty)$ with $a\in (-\infty, \ln K )$ is a mild equilibrium under $X$ if and only if
 $$a\geq  \ln\left(\frac{\lambda}{1+\lambda}K\right),$$
and $S_*:=(-\infty, \ln(\frac{\lambda}{1+\lambda} K))$ is the unique optimal mild equilibrium under $X$. 
\end{Corollary}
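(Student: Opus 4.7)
The plan is two-fold: port Lemma \ref{lm:egnew.0} from the $Z$-coordinate to the $X$-coordinate via $x=\ln z$, and then invoke Lemma \ref{lm.optimalmild} to pin down $S_*$.

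For the first assertion (equilibrium characterisation for half-lines of the form $(-\infty,a]$), fix $a\in(-\infty,\ln K)$ and $x>a$. Path continuity of $X$ together with the bijection $z=e^x$ gives $\rho_{(-\infty,a]}(X)=\rho_{\{e^a\}}(Z)$ under $\E_x=\tilde\E_{e^x}$; combined with $f=g\circ\exp$ this is precisely the identity $J(x,(-\infty,a])=\eta(e^x;e^a)$ recorded before the statement. Consequently, the mild-equilibrium inequality $f(x)\le J(x,(-\infty,a])$ for $x>a$ is equivalent to $g(z)\le\eta(z;e^a)$ for $z>e^a$, which by Lemma \ref{lm:egnew.0} holds iff $e^a\ge\frac{\lambda}{1+\lambda}K$, i.e., iff $a\ge a_*:=\ln\bigl(\frac{\lambda}{1+\lambda}K\bigr)$. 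The condition on $x\le a$ is trivial by path continuity.

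For the second assertion, Lemma \ref{lm.optimalmild} already identifies the unique optimal mild equilibrium as $S_*=\bigcap_{S\in\Ec}S$; it remains to check $S_*=(-\infty,a_*]$. The inclusion $S_*\subseteq(-\infty,a_*]$ is immediate from the first assertion. For the reverse inclusion I would show every $S\in\Ec$ contains $(-\infty,a_*]$. First, since $f(x)\to K$ as $x\to-\infty$ while $\E_x[\delta(\rho_S)]\to 0$ whenever $\inf S>-\infty$ (because $\rho_S\to\infty$ a.s.\ and $\delta(\infty)=0$), every mild equilibrium must satisfy $\inf S=-\infty$. Second, if some $x_0\in(-\infty,a_*]\setminus S$ existed, set $\underline x:=\sup(S\cap(-\infty,x_0))$ and $\overline x:=\inf(S\cap(x_0,\infty))$; then $J(x_0,S)$ is realised by exiting the interval $(\underline x,\overline x)$, and the strict convexity together with the tangency property of $\eta(\cdot;e^a)$ in Lemma \ref{lm:egnew.0} force $J(x_0,S)<f(x_0)$, contradicting the equilibrium inequality at $x_0$.

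The main obstacle I foresee is this reverse inclusion: since $J(\cdot,S)$ is not monotone in $S$, one cannot compare a general equilibrium directly with $(-\infty,a_*]$. A clean implementation will apply the convex-tangency property of $\eta(\cdot;\tilde a)$ versus $g$ from Lemma \ref{lm:egnew.0} within each gap of $S$ in $(-\infty,a_*]$, using a two-point boundary (one-dimensional exit) argument to quantify the shortfall $f-J(\cdot,S)$ on the interior; this is what rules out exotic closed equilibria such as those containing isolated points or having a disconnected structure near $a_*$.
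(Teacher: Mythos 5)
Your translation of Lemma \ref{lm:egnew.0} to the $X$-coordinate and your observation that $\inf S=-\infty$ for every mild equilibrium are both correct. The problem is the step you yourself flag as the ``main obstacle'': you propose to rule out a gap $(\underline x,\overline x)\subset(-\infty,a_*]\setminus S$ (with $\underline x,\overline x\in S$, $\underline x<a_*$) via ``strict convexity together with the tangency property of $\eta(\cdot;e^a)$,'' and this mechanism does not actually close the gap. The value $J(\cdot,S)$ on $(\underline x,\overline x)$ is a \emph{two-point} boundary problem, not the one-sided function $\eta$; it need not be convex, and its one-sided derivative at $\underline x$ is \emph{not} forced to be smaller than $f'(\underline x+)$ (in the limit of a shrinking gap the two-point exit value converges to the affine interpolant of $f$, whose slope at $\underline x$ is exactly $f'(\underline x)$, not $\eta'(\underline x+;\underline x)$). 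So the derivative/tangency comparison with $\eta$ does not give $J(x_0,S)<f(x_0)$, and the strict convexity of $\eta$ is not the relevant property for a two-sided exit.

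The ingredient you are missing is the one the paper sets up just before Lemma \ref{lm:egnew.0} and actually uses in the parallel discrete statement (Lemma \ref{lm:egnew.1}, Step 1), namely that $Z$ is a \emph{submartingale} under $\nu\ge -1/2$. Passing to $z=e^x$: if $\underline z:=e^{\underline x}<\overline z:=e^{\overline x}<K$, then on $(\underline z,\overline z)$ the payoff $g(z)=K-z$ is affine, so with $\rho$ the exit time of $(\underline z,\overline z)$, optional stopping gives $\tilde\E_{z_0}[g(Z_\rho)]=K-\tilde\E_{z_0}[Z_\rho]\le K-z_0=g(z_0)$, and since $\rho>0$ a.s.\ and $g(Z_\rho)>0$ a.s.\ one has the strict inequality $\tilde\E_{z_0}[\delta(\rho)g(Z_\rho)]<g(z_0)$, i.e.\ $J(x_0,S)<f(x_0)$, contradicting \eqref{eq:def.mild1}. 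If instead $\overline z\ge K$ or $\overline x=\infty$, the right boundary reward is $0$, so $J(\cdot,S)$ on the gap is dominated by the one-sided $\eta(\cdot;e^{\underline x})$; because $\underline x<a_*$, Lemma \ref{lm:egnew.0}(iii) (here the tangency argument \emph{is} the right tool) gives $\eta(z;e^{\underline x})<g(z)$ for $z$ slightly above $e^{\underline x}$, again a contradiction. Combining these two cases with $\inf S=-\infty$ yields $(-\infty,a_*]\subseteq S$ for every $S\in\Ec$, hence $S_*\supseteq(-\infty,a_*]$; together with $S_*\subseteq(-\infty,a_*]$ (from the first part and Lemma \ref{lm.optimalmild}) this pins down $S_*$. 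In short: your outline points in the right direction (gap plus two-point exit), but the decisive estimate comes from the submartingale/affine-payoff comparison, not from the convexity of $\eta$; as written, your argument has a genuine gap.
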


Take $h>0$. The discretization under $X$ is given by $d_{k,\pm}=\sqrt{h}$, $x^h_k=\sqrt{h} k$ and
$$\P(X^h_{t+1} = x^h_k\pm \sqrt{h} \mid X^h_t=x^h_k)=p^h_{k,\pm}=\frac{1 \pm \nu\sqrt{h}}{2},\quad k\in \Z.$$
Equivalently, the discretization $Z^h$ for $Z$ reads $z^h_k=e^{x^h_k}=e^{\sqrt{h}k}$ and $\P(Z^h_{n+1}/Z^h_n)=e^{\pm \sqrt{h}}= p^h_{k,\pm}$. Moreover, $e^{\sqrt{h}} p^h_{k, +} + e^{-\sqrt{h}} p^h_{k, -} >1$, which shows that $Z^h$ is also a submartingale for any $h>0$. We shall use $\E^h$ (resp. $\tilde \E^h$) for the expectation under $X^h$ (resp. $Z^h$).


Define
$$\xi^h:=\inf\{k\in\N:\ X_k^h=0\,|\,X_0^h=\sqrt{h}\}\quad\text{and}\quad a^h:=\frac{1- \E[\delta^h(\xi^h)]}{ e^{\sqrt{h}} - \E[\delta^h(\xi^h)] } K.$$
Recall $\lambda$ defined in Lemma \ref{lm:egnew.0}. We have the following result.

\begin{Lemma}\label{lem001}
We have that
$$\lim_{h\to0+}a^h =  \frac{\lambda K}{1+\lambda}.$$
\end{Lemma}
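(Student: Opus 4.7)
The plan is to obtain a precise $\sqrt h$-expansion of $\E[\delta^h(\xi^h)]$ and then substitute into the explicit formula for $a^h$. Since
\[
\E[\delta^h(\xi^h)] = \int_1^2 \E\bigl[(1+rh)^{-\xi^h}\bigr]\,dF(r),
\]
everything reduces to a first-order asymptotic for the probability generating function of $\xi^h$ evaluated at $s=(1+rh)^{-1}$, uniformly in $r\in[1,2]$.

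First I would set up a one-step recursion for $\varphi_h(s):=\E[s^{\xi^h}]$, $s\in(0,1]$. Starting at $\sqrt h$, after one step $X^h$ is either at $0$ (with probability $(1-\nu\sqrt h)/2$, contributing the factor $s$) or at $2\sqrt h$ (with probability $(1+\nu\sqrt h)/2$). In the second case, the lattice nature of the walk forces passage through $\sqrt h$, and the strong Markov property together with spatial homogeneity show the remaining time equals in law the sum of two independent copies of $\xi^h$. Hence
\[
\varphi_h(s) = s\left[\tfrac{1-\nu\sqrt h}{2} + \tfrac{1+\nu\sqrt h}{2}\,\varphi_h(s)^2\right],
\]
which, picking the root in $[0,1]$, gives the closed form
\[
\varphi_h(s) = \frac{1 - \sqrt{1-s^2(1-\nu^2 h)}}{s(1+\nu\sqrt h)}.
\]

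Next I would plug in $s=(1+rh)^{-1}$ and Taylor-expand. The inner factor satisfies
\[
1-s^2(1-\nu^2 h) = (2r+\nu^2)\,h + O(h^2),
\]
so $\sqrt{1-s^2(1-\nu^2 h)} = \sqrt{2r+\nu^2}\,\sqrt h + O(h^{3/2})$, while $(s(1+\nu\sqrt h))^{-1} = 1-\nu\sqrt h + O(h)$. Multiplying out yields, uniformly in $r\in[1,2]$,
\[
\varphi_h\bigl((1+rh)^{-1}\bigr) = 1 - \bigl(\sqrt{\nu^2+2r}+\nu\bigr)\sqrt h + O(h).
\]
Because $r$ ranges over the compact set $[1,2]$, the $O(h)$ remainder is uniform, so I can integrate against $dF(r)$ and obtain
\[
\E[\delta^h(\xi^h)] = 1 - \lambda\sqrt h + O(h).
\]

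The conclusion is then immediate:
\[
a^h = \frac{1 - \E[\delta^h(\xi^h)]}{e^{\sqrt h} - \E[\delta^h(\xi^h)]}\,K = \frac{\lambda\sqrt h + O(h)}{(1+\lambda)\sqrt h + O(h)}\,K \;\longrightarrow\; \frac{\lambda K}{1+\lambda}.
\]
The main (modest) obstacle is ensuring uniformity of the $O(h)$ remainder in $r\in[1,2]$ so that it survives the integration against $dF$; this is handled by writing each Taylor remainder as an explicit error term bounded by a polynomial in $\sqrt{\nu^2+2r}$, which is bounded on $[1,2]$.
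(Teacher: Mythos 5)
Your proof is correct and follows essentially the same route as the paper: both reduce the claim to the closed-form generating function of $\xi^h$ evaluated at $(1+rh)^{-1}$, expand it to order $\sqrt{h}$ uniformly in $r\in[1,2]$, and integrate against $dF$. Your derivation of the generating function via the one-step recursion and your explicit note on uniformity of the $O(h)$ remainder merely spell out steps the paper states tersely (the paper simply cites ``the generating function'' and swaps limit and integral without comment).
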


\begin{proof}
It is equivalent to show that
\be\label{eq:egnew.prop1} 
\lim_{h\to 0+}\frac{1-\E^h[\delta^h(\xi^h)]}{e^{\sqrt{h}}-1} =\lambda.
\ee 
By the generating function, we have that for $x\in \X^h\cap (a,\infty)$
$$
\E^h[(1+rh)^{-\xi^h }] = F((1+rh)^{-1}) \text{ with } F(l) = \frac{1-\sqrt{1-4 p^h_{k,+}p^h_{k,-} l^2 }}{2p^h_{k,+} l }.
$$
That is,
\begin{align}\label{equ001}
\E^h[(1+rh)^{-\xi^h }]=&   \frac{1+rh-\sqrt{r^2 h ^2 + (2r+\nu^2) h}}{1+ \nu \sqrt{h}}.
\end{align}
Then
\begin{align*}
\lim_{h\to 0+}\frac{\E^h[(1+rh)^{-\xi^h }]-1}{e^{\sqrt{h}}-1} &= \lim_{h\to 0+}\frac{\frac{1+rh-\sqrt{r^2 h ^2 + (2r+\nu^2) h}}{1+ \nu \sqrt{h}}-1}{e^{\sqrt{h}}-1}\\
&=  \lim_{h\to 0+}\frac{\frac{1-\sqrt{2r+\nu^2}\sqrt{h}}{1+ \nu \sqrt{h}}-1+o(\sqrt{h})}{\sqrt{h}+o(\sqrt{h})}= -\sqrt{2r+\nu^2} -\nu.
\end{align*}
As a result, 
\begin{align*}
&\lim_{h\to 0+}\frac{1-\E^h[\delta^h(\xi^h)]}{e^{\sqrt{h}}-1} = \lim_{h\to 0+}\frac{1-\int_1^2 \E^h[(1+rh)^{-\xi^h }] dF(r) }{e^{\sqrt{h}}-1} = \lim_{h\to 0+}\int_1^2 \frac{1-\E^h[(1+rh)^{-\xi^h }]}{e^{\sqrt{h}}-1} dF(r) \\
&= \int_1^2 (\sqrt{\nu^2 +2r} +\nu) dF(r) =\lambda.
\end{align*}
\end{proof}

\begin{Lemma}\label{lm:egnew.1}
Let $h>0$ be small enough such that $a_h<K-2\sqrt{h}$ (thanks to Lemma \ref{lem001}). Define
$$x^h:=\inf\{ x\in \X^h: x\geq \ln(a^h)\}.$$
Then $S^h_* = (-\infty, x^h]\cap \X^h$ (under $X^h$).
\end{Lemma}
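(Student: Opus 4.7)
Plan. The proof has two parts: verify that $S := (-\infty, x^h] \cap \X^h \in \Ec^h$, then show $S_*^h = S$ using the characterisation $S_*^h = \cap_{R \in \Ec^h} R$ from the preceding lemma. The starting point is that the definition of $a^h$ rewrites as the discrete value-matching identity
\[(K - a^h)\phi(1) = K - a^h e^{\sqrt h},\]
where $\phi(1) := \E^h[\delta^h(\xi^h)] = \int q_r\, dF(r)$ and $q_r := \E^h[(1+rh)^{-\xi^h}]$; the one-step analysis of $\xi^h$ also yields the characteristic equation $p^h_+ q_r^2 - (1+rh)q_r + p^h_- = 0$. These two identities are the main algebraic tools.

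For condition (a) at $x = x^h + k\sqrt h$ with $k \geq 1$: by spatial homogeneity of $X^h$ the hitting time of $S$ decomposes as a sum $\xi_1^h + \cdots + \xi_k^h$ of i.i.d.\ copies of $\xi^h$, so $J^h(x, S) = (K - e^{x^h})\phi(k)$ with $\phi(k) := \int q_r^k\, dF$. Monotonicity of $y \mapsto (K - e^y)\phi(k) - (K - e^{y + k\sqrt h})$ (positive derivative $e^y(e^{k\sqrt h} - \phi(k))$) reduces the verification to $y = \ln a^h$: the case $k = 1$ is the value-matching identity, and for $k \geq 2$ Chebyshev's inequality gives $\phi(k) \geq \phi(1)^k$ and one telescopes $\sum_{j=0}^{k-1}\phi(1)^j \leq \sum_{j=0}^{k-1} e^{j\sqrt h}$. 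For condition (b) at $x \leq x^h - \sqrt h$ in $S$, both neighbours lie in $S$ so $\rho_S(h) = 1$; the bound reduces to $e^x \leq K(1 - \alpha_h)/(1 - \alpha_h\gamma_h)$ with $\alpha_h := \int (1+rh)^{-1}\, dF$ and $\gamma_h := p^h_+ e^{\sqrt h} + p^h_- e^{-\sqrt h}$, and follows from $e^x \leq a^h$ via a direct rearrangement using the value-matching identity.

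The main obstacle is condition (b) at the boundary $x = x^h$. Conditioning on the first step gives $J^h(x^h, S) = p^h_- \alpha_h f(x^h - \sqrt h) + p^h_+ \beta_h f(x^h)$ with $\beta_h := \int (1+rh)^{-1} q_r\, dF$, and the required inequality rearranges to $(K - e^{x^h - \sqrt h})/(K - e^{x^h}) \leq (1 - p^h_+\beta_h)/(p^h_-\alpha_h)$. The left side is increasing in $e^{x^h}$ and its supremum over $e^{x^h} \in [a^h, a^h e^{\sqrt h})$ equals $1/\phi(1)$ (via the value-matching identity), so it suffices to prove $p^h_-\alpha_h + p^h_+ \phi(1)\beta_h \leq \phi(1)$; here the leading $\sqrt h$ terms cancel exactly. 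The key is Chebyshev's inequality applied to the decreasing-in-$r$ functions $q_r$ and $(1+rh)^{-1} q_r$, which gives $\phi(1)\beta_h \leq \int(1+rh)^{-1} q_r^2\, dF$; the characteristic equation then collapses
\[p^h_- \alpha_h + p^h_+\phi(1)\beta_h \leq \int (1+rh)^{-1}[p^h_- + p^h_+ q_r^2]\, dF = \int (1+rh)^{-1}(1+rh)q_r\, dF = \phi(1).\]

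For minimality, an analogous computation shows the proper subset $S' := (-\infty, x^h - \sqrt h] \cap \X^h$ fails condition (a) at $x^h$: conditioning on the first step and applying the same characteristic-equation collapse yields $J^h(x^h, S') = f(x^h - \sqrt h)\phi(1)$, so the required inequality becomes $e^{x^h} \geq a^h e^{\sqrt h}$, contradicting the strict inequality $e^{x^h} < a^h e^{\sqrt h}$ built into the definition $x^h = \inf\{x \in \X^h : x \geq \ln a^h\}$. The same argument with $N$ down-steps in place of one, combined with the monotonicity of $u_N := (1 - \phi(N))/(e^{N\sqrt h} - \phi(N))$ in $N$ (a consequence of $\phi(N) \geq \phi(1)^N$), shows that no proper subset of $S$ is an equilibrium, hence $S \subseteq R$ for every $R \in \Ec^h$ and $S_*^h = S$.
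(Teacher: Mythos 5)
The core of your proposal—directly verifying both equilibrium conditions in Definition 2.3 rather than passing through the pseudo-equilibrium machinery—is a legitimate alternative route, and the individual computations you do are correct and sometimes elegant. In particular, your treatment of condition (b) at the boundary $x^h$ is nice: conditioning on the first step, invoking the characteristic equation $(1+rh)q_r = p^h_- + p^h_+ q_r^2$, and using Chebyshev's integral inequality for the two decreasing functions $q_r$ and $(1+rh)^{-1}q_r$ to collapse $p^h_-\alpha_h + p^h_+\phi(1)\beta_h \le \phi(1)$ is a clean self-contained argument. By contrast, the paper works under $Z^h$ entirely with \emph{pseudo}-equilibria (which require only the analogue of condition (a)) and then invokes \cite[Proposition 4.2]{bwz-2023} to identify the smallest pseudo-equilibrium with $S_*^h$; this allows the paper to avoid ever checking condition (b). Your approach is more elementary in that sense, at the cost of a more involved verification.

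However, there is a genuine gap in your minimality argument. You show that the intervals $(-\infty, x^h - N\sqrt h]\cap\X^h$ fail condition (a) at $x^h - (N-1)\sqrt h$ (using the value-matching identity and the monotonicity of $u_N$), and conclude ``no proper subset of $S$ is an equilibrium, hence $S\subseteq R$ for every $R\in\Ec^h$ and $S_*^h=S$.'' This does not follow. A proper subset of $S$ need not be of the form $(-\infty, x^h - N\sqrt h]\cap\X^h$: a priori, $S_*^h$ (which is an equilibrium by \cite[Lemma 2.1]{bwz-2023} and is contained in $S$ once you know $S\in\Ec^h$) could be a non-interval subset, e.g.\ $S$ with a hole removed. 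You never rule this out. The paper closes exactly this gap in its Step 1: using the submartingale property of $Z^h$, it shows that any pseudo-equilibrium disjoint from $[K,\infty)$ is necessarily a left half-line $(0,\tilde a_T]\cap\tilde\X^h$; combined with the closedness of pseudo-equilibria under intersection (\cite[Lemma 4.2(a)]{bwz-2023}), this forces $T_h\cap R = T_h$ for every pseudo-equilibrium $R$. You need an analogous structure result—either the same submartingale argument showing that any equilibrium contained in $(-\infty,\ln K)$ is a left half-line, or the intersection lemma applied to $S\cap R$—before your $N$-down-steps computation can be promoted to ``no proper subset of $S$ is an equilibrium.'' A secondary, lesser issue: the ``direct rearrangement'' claimed for condition (b) at $x\le x^h-\sqrt h$, namely $a^h \le K(1-\alpha_h)/(1-\alpha_h\gamma_h)$, is not obviously immediate; it holds (it reduces to $(1-\phi(1))(1-\alpha_h\gamma_h)\le(1-\alpha_h)(e^{\sqrt h}-\phi(1))$, which at leading order is $\lambda(\bar r - \mu)\le\bar r(1+\lambda)$ with $\bar r=\int r\,dF$ and uses $\mu\ge 0$), but it deserves an explicit verification rather than being waved through.
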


\begin{proof}
It is equivalent to show that $(0, e^{x^h}]\cap \tilde \X^h$ is the optimal equilibrium under $Z^h$. By \cite[Proposition 4.2 ]{bwz-2023}, it suffices to show that $(0, e^{x^h}]\cap \tilde \X^h$ is the smallest pseudo equilibrium (see \cite[Definition 4.2]{bwz-2023} for the definition of pseudo equilibria). 

\textbf{Step 1.} Let $T$ be a pseudo equilibrium under $Z^h$ such that $T\cap[K,\infty)=\emptyset$. We show that $T$ has the form $T=(0,\tilde a_T]\cap \tilde \X^h$ with $\tilde a_T:=\sup \{ y\in T\}< K$. Indeed, suppose this does not hold. Take $z\in \tilde \X^h$ with $z\notin T$ and $z<\tilde a_T<K$, and set $l:= \max\{y\in \tilde X^h: y< z, y\in T \}$ (here $\max\emptyset:=0$) and $r:= \min\{y\in \tilde X^h: y> z, y\in T \}<K$. $Z^h$ being a submartingale tells that
$$g(z)=K-z>\tilde \E^h_z[\delta(\rho_T) g(Z^h_{\rho_T})].$$
This would contradicts $T$ being a pseudo equilibrium.

\textbf{Step 2.} We show that if $T=(0,y]\cap \tilde \X^h$ for some $y\in\tilde\X\cap(0,K)$ is a pseudo equilibrium under $Z^h$, then $y\geq a^h$. Notice that $\frac{1- \E^h[\delta^h(\xi^h)]}{ e^{\sqrt{h}} - \E^h[\delta^h(\xi^h)] }  \leq  e^{-\sqrt{h}}$, so the statement holds trivially for $y \geq  K e^{-\sqrt{h}}$. Now we assume $y<Ke^{-\sqrt{h}}$. Since $e^{\sqrt{h}}>1$,we have $y<ye^{\sqrt{h}}<K$, which implies that $ye^{\sqrt{h}}\notin T$. Then
\be\notag
K-ye^{\sqrt{h}}=g(ye^{\sqrt{h}})\leq \tilde \E^h_{ye^{\sqrt{h}}}[\delta^h(\rho_T) g(\rho_T)]=(K-y) \tilde \E^h_{ye^{\sqrt{h}}}[\delta^h(\rho_T) ],
\ee
This implies $y \geq a^h$.

\textbf{Step 3.} Set $y^h:= e^{x^h} = \min \{ y\in \tilde \X, y\geq a_h\}$ and $T_h:=(0,y^h]\cap \tilde \X^h$. We show that $T_h$ is a pseudo equilibrium under $Z^h$. 
Take $z=y^h e^{k\sqrt{h}}\in\X^h\setminus T^h$ for some $k\in \N_+$. As $e^{\sqrt{h}}>1$ and $y^h\geq a^h= K\frac{1- \E^h[\delta^h(\xi^h)]}{ e^{\sqrt{h}} - \E^h[\delta^h(\xi^h)] }$, we have that
\be\label{eq:egnew.lm1} 
\begin{aligned}
y^h\geq & K\frac{1- \E[\delta^h(\xi^h)]}{ e^{\sqrt{h}} - \E[\delta^h(\xi^h)] } \cdot \frac{ 1+  \E^h[\delta^h(\xi^h)]+\cdots + ( \E^h[\delta^h(\xi^h)])^{k-1}  }{ e^{(k-1)\sqrt{h}} +  e^{(k-2)\sqrt{h}}  \E^h[\delta^h(\xi^h)]+ \cdots +  (\E^h[\delta^h(\xi^h)])^{k-1}} \\
= & \frac{1- ( \E^h[\delta^h(\xi^h)])^k}{e^{k\sqrt{h}}-( \E^h[\delta^h(\xi^h)])^k} K.
\end{aligned}
\ee 
Notice that 
\be\label{eq:egnew.lm3} 
\begin{aligned}
\tilde \E^h_{z}[\delta^h(\rho_{T_h})]=&\E^h_{k\sqrt{h}}[\delta^h(\rho_{\{0\}})]
\geq \E^h_{\sqrt{h}}[\delta^h(\rho_{\{0\}})]\E^h_{{k-1}\sqrt{h}}[\delta^h(\rho_{\{0\}})]\\
\geq &\cdots \geq  (\E^h_{\sqrt{h}}[\delta^h(\rho_{\{0\}})])^k = (\E^h[\delta^h(\xi^h)])^k,
\end{aligned}
\ee
where the inequalities follows from that $\delta^h(m+n)\geq \delta^h(m)\delta^h(n)$.
Then 
\begin{align*}  
\tilde \E^h_z[\delta^h(\rho_{T_h}) g(Z^h_{\delta^h(\rho_{T_h})})] =& (K-y^h) \tilde \E^h_{z}[\delta^h(\rho_{T_h})]\geq (K-y^h) (\E^h[\delta^h(\xi^h)])^k\\
\geq & \max\{K- y^h e^{k\sqrt{h}},0\} = (K-z)^+ = g(z),
\end{align*}
where the first inequality follows from \eqref{eq:egnew.lm3} and the second inequality follows from \eqref{eq:egnew.lm1}. Hence, $T^h$ is a pseudo equilibrium.
%

\textbf{Step 4.} Now take any pseudo equilibrium $R$ under $Z^h$. By \cite[Lemma 4.2]{bwz-2023}, $T^h\cap R$ is also a pseudo equilibrium. By Steps 1 and 2, $T^h\cap R\supseteq T^h$. This implies $T^h\subset R$ and thus $T^h$ is the smallest pseudo equilibrium. This completes the proof.
\end{proof}

Now we are ready to prove the main result in this section.
\begin{Proposition}\label{proex}
	We have 
	\be\notag
	\lim_{h\to 0+} |V^h(x)-V(x)|=0\quad \forall x\in \X.
	\ee 
\end{Proposition}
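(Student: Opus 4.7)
The plan is to use the triangle inequality
\[
|V^h(x)-V(x)| \leq |\tilde J^h(x, S_*^h) - J(x, S_*^h)| + |J(x, S_*^h) - J(x, S_*)|,
\]
control the first term via Proposition \ref{lm:main} and establish pointwise convergence of the second term by exploiting the explicit structure of $S_*$ and $S_*^h$. First I would check that Assumptions \ref{assum:1} and \ref{a3} are in force in this setting: the transformed process $X=\ln Z$ has $\mu\equiv \nu$ and $\sigma\equiv 1$, so Assumption \ref{assum:1}(ii) holds; the payoff $f(x)=(K-e^x)^+$ is bounded by $K$ and globally Lipschitz with constant $K$, giving Assumption \ref{assum:1}(i); and the uniform grid $d_{k,\pm}\equiv \sqrt h$ trivially satisfies \eqref{eq100}. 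By Lemma \ref{lm:egnew.1}, $S_*^h=(-\infty,x^h]\cap \X^h$ is a closed subset of $\R$ with $S_*^h(h)=S_*^h$, and Lemma \ref{lem001} together with the rounding-up definition of $x^h$ give $x^h\to a^*:=\ln(\lambda K/(1+\lambda))$, which is the right endpoint of $S_*=(-\infty,a^*]$ identified in Corollary \ref{cor:egnew}.

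Proposition \ref{lm:main} then yields $|\tilde J^h(x, S_*^h)-J(x, S_*^h)|\leq C\sqrt h$ uniformly in $x$. The pointwise convergence $J(x, S_*^h)\to J(x, S_*)$ splits into two cases. For $x>a^*$, once $h$ is small enough that $x>x^h$, path continuity of $X$ forces $\rho_{S_*^h}=\rho_{\{x^h\}}$ and $\rho_{S_*}=\rho_{\{a^*\}}$, and the explicit formula in Lemma \ref{lm:egnew.0} gives
\[
J(x, S_*^h)=\eta(e^x; e^{x^h}) \longrightarrow \eta(e^x; e^{a^*})=J(x, S_*)
\]
by joint continuity of $\eta$. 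For $x<a^*$ we have $J(x, S_*)=f(x)$, and since $x^h\geq \ln a^h\to a^*>x$, for $h$ small both neighbouring lattice points $\lfloor x/\sqrt h\rfloor\sqrt h$ and $\lceil x/\sqrt h\rceil\sqrt h$ lie in $S_*^h$. Hence $\rho_{S_*^h}$ is dominated by the exit time of $X$ from an interval of length $\sqrt h$ around $x$, which has expectation $O(h)$; combining with $\delta(0)=1$, the local Lipschitz continuity of $\delta$ and $f$, and $|X_{\rho_{S_*^h}}-x|\leq \sqrt h$, this gives $J(x, S_*^h)\to f(x)$.

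The boundary case $x=a^*$ is handled by a sandwich argument using the uniform Lipschitz constant $L$ from Corollary \ref{lm:uniflip}, which applies to $J(\cdot, S)$ for every closed $S$ and in particular uniformly across $h$ for $S_*^h$: for any $\eps>0$,
\[
|J(a^*, S_*^h)-J(a^*, S_*)| \leq 2L\eps + |J(a^*+\eps, S_*^h)-J(a^*+\eps, S_*)|,
\]
so sending $h\to 0$ (using the $x>a^*$ case) and then $\eps\to 0$ completes the argument. The main obstacle is the regime $x\leq a^*$: one has to confirm that $S_*^h$ stays dense enough around $x$ as $h\to 0$ to force $\rho_{S_*^h}$ to be of order $h$, and this is delivered precisely by the one-sided rounding definition $x^h=\inf\{y\in \X^h: y\geq \ln a^h\}$ together with the limit $a^h\to \lambda K/(1+\lambda)$ from Lemma \ref{lem001}.
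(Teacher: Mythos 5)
Your proposal is correct and follows exactly the route the paper intends: the paper's proof is a one-line pointer to Lemmas \ref{lm:egnew.0}--\ref{lm:egnew.1}, Corollary \ref{lm:uniflip}, and Proposition \ref{lm:main}, and your decomposition $|V^h(x)-V(x)|\leq|\tilde J^h(x,S_*^h)-J(x,S_*^h)|+|J(x,S_*^h)-J(x,S_*)|$ with the uniform $\sqrt h$ bound for the first term and the convergence $x^h\to a^*$ for the second is precisely what those citations deliver. You simply make explicit the verification of Assumptions \ref{assum:1} and \ref{a3} and the case analysis for the pointwise limit, which the paper leaves implicit.
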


\begin{proof}
The result follows from Lemmas \ref{lm:egnew.0}-\ref{lm:egnew.1}, Corollary \ref{lm:uniflip} and Proposition \ref{lm:main}.
\end{proof}

\bibliographystyle{plain}
\bibliography{reference}

\begin{thebibliography}{10}

\bibitem{bwz-2022}
Erhan Bayraktar, Zhenhua Wang, and Zhou Zhou.
\newblock Short communication: stability of time-inconsistent stopping for
  one-dimensional diffusions.
\newblock {\em SIAM J. Financial Math.}, 13(4):SC123--SC135, 2022.

\bibitem{MR4609255}
Erhan Bayraktar, Zhenhua Wang, and Zhou Zhou.
\newblock Equilibria of time-inconsistent stopping for one-dimensional
  diffusion processes.
\newblock {\em Math. Finance}, 33(3):797--841, 2023.

\bibitem{bwz-2023}
Erhan Bayraktar, Zhenhua Wang, and Zhou Zhou.
\newblock Stability of equilibria in time-inconsistent stopping problems.
\newblock {\em SIAM J. Control Optim.}, 61(2):674--696, 2023.

\bibitem{MR4205889}
Erhan Bayraktar, Jingjie Zhang, and Zhou Zhou.
\newblock Equilibrium concepts for time-inconsistent stopping problems in
  continuous time.
\newblock {\em Math. Finance}, 31(1):508--530, 2021.

\bibitem{MR4387700}
Tomas Bj\"{o}rk, Mariana Khapko, and Agatha Murgoci.
\newblock {\em Time-inconsistent control theory with finance applications}.
\newblock Springer Finance. Springer, Cham, [2021] \copyright 2021.

\bibitem{bodnariu2024local}
Andi Bodnariu, S{\"o}ren Christensen, and Kristoffer Lindensj{\"o}.
\newblock Local time pushed mixed equilibrium strategies for time-inconsistent
  stopping problems.
\newblock {\em SIAM J. Control Optim.}, 62(2):1261--1290, 2024.

\bibitem{MR1912205}
Andrei~N. Borodin and Paavo Salminen.
\newblock {\em Handbook of {B}rownian motion---facts and formulae}.
\newblock Probability and its Applications. Birkh\"{a}user Verlag, Basel,
  second edition, 2002.

\bibitem{MR3880244}
S\"{o}ren Christensen and Kristoffer Lindensj\"{o}.
\newblock On finding equilibrium stopping times for time-inconsistent
  {M}arkovian problems.
\newblock {\em SIAM J. Control Optim.}, 56(6):4228--4255, 2018.

\bibitem{MR4080735}
S\"{o}ren Christensen and Kristoffer Lindensj\"{o}.
\newblock On time-inconsistent stopping problems and mixed strategy stopping
  times.
\newblock {\em Stochastic Process. Appl.}, 130(5):2886--2917, 2020.

\bibitem{MR4276004}
S\"{o}ren Christensen and Kristoffer Lindensj\"{o}.
\newblock Time-inconsistent stopping, myopic adjustment and equilibrium
  stability: with a mean-variance application.
\newblock In {\em Stochastic modeling and control}, volume 122 of {\em Banach
  Center Publ.}, pages 53--76. Polish Acad. Sci. Inst. Math., Warsaw, 2020.

\bibitem{MR4124420}
Sebastian Ebert, Wei Wei, and Xun~Yu Zhou.
\newblock Weighted discounting---on group diversity, time-inconsistency, and
  consequences for investment.
\newblock {\em J. Econom. Theory}, 189:105089, 40, 2020.

\bibitem{feinstein2022continuity}
Zachary Feinstein.
\newblock Continuity and sensitivity analysis of parameterized nash games.
\newblock {\em Economic Theory Bulletin}, 10(2):233--249, 2022.

\bibitem{feinstein2022dynamic}
Zachary Feinstein, Birgit Rudloff, and Jianfeng Zhang.
\newblock Dynamic set values for nonzero-sum games with multiple equilibriums.
\newblock {\em Math. Oper. Res.}, 47(1):616--642, 2022.

\bibitem{MR4328502}
Xue~Dong He and Zhao~Li Jiang.
\newblock On the equilibrium strategies for time-inconsistent problems in
  continuous time.
\newblock {\em SIAM J. Control Optim.}, 59(5):3860--3886, 2021.

\bibitem{huang2018time}
Yu-Jui Huang and Adrien Nguyen-Huu.
\newblock Time-consistent stopping under decreasing impatience.
\newblock {\em Finance Stoch.}, 22(1):69--95, 2018.

\bibitem{MR4273542}
Yu-Jui Huang and Xiang Yu.
\newblock Optimal stopping under model ambiguity: a time-consistent equilibrium
  approach.
\newblock {\em Math. Finance}, 31(3):979--1012, 2021.

\bibitem{MR3911711}
Yu-Jui Huang and Zhou Zhou.
\newblock The optimal equilibrium for time-inconsistent stopping problems---the
  discrete-time case.
\newblock {\em SIAM J. Control Optim.}, 57(1):590--609, 2019.

\bibitem{MR4116459}
Yu-Jui Huang and Zhou Zhou.
\newblock Optimal equilibria for time-inconsistent stopping problems in
  continuous time.
\newblock {\em Math. Finance}, 30(3):1103--1134, 2020.

\bibitem{MR4288523}
Yu-Jui Huang and Zhou Zhou.
\newblock Strong and weak equilibria for time-inconsistent stochastic control
  in continuous time.
\newblock {\em Math. Oper. Res.}, 46(2):428--451, 2021.

\bibitem{MR4332966}
Ken~Seng Tan, Wei Wei, and Xun~Yu Zhou.
\newblock Failure of smooth pasting principle and nonexistence of equilibrium
  stopping rules under time-inconsistency.
\newblock {\em SIAM J. Control Optim.}, 59(6):4136--4154, 2021.

\bibitem{MR3738841}
Qingmeng Wei, Jiongmin Yong, and Zhiyong Yu.
\newblock Time-inconsistent recursive stochastic optimal control problems.
\newblock {\em SIAM J. Control Optim.}, 55(6):4156--4201, 2017.

\bibitem{MR2822686}
Jiongmin Yong.
\newblock A deterministic linear quadratic time-inconsistent optimal control
  problem.
\newblock {\em Math. Control Relat. Fields}, 1(1):83--118, 2011.

\bibitem{ssrn4431616}
Zhou {Zhou}.
\newblock {Almost Strong Equilibria for Time-Inconsistent Stopping Problems
  Under Finite Horizon in Continuous Time}.
\newblock {\em Math. Finance}, 34(4):1089--1122, 2024.

\end{thebibliography}

\end{document}